\newcommand{\comment}[1]{}
\theoremstyle{plain}
\theoremstyle{plain}
\newtheorem{theorem}{Theorem}[]
\newtheorem{lemma}[theorem]{Lemma}
\newtheorem{corollary}[theorem]{Corollary}
\newtheorem{proposition}[theorem]{Proposition}
\newtheorem*{definition}{Definition}
\newtheorem*{remarks}{Remarks}
\newcommand\Nat{\mathbb{N}}
\newcommand\RCA{\mathsf{RCA}}
\newcommand\RT{\mathsf{RT}}
\newcommand\W{\mathrm{W}}
\newcommand\sW{\mathrm{sW}}
\newcommand\WO{\mathrm{WO}}
\newcommand\WOP{\mathsf{WOP}}
\newcommand\ART{\mathsf{ART}}
\newcommand\ORT{\mathsf{ORT}}
\newcommand\Q{\mathsf{Q}}
\newcommand\PP{\mathsf{P}}
\newcommand\TC{\mathsf{TC}}
\newcommand\LPO{\mathsf{LPO}}
\newcommand\ECT{\mathsf{ECT}}
\newcommand\IsFinite{\mathsf{IsFinite}}
\title{Weihrauch reducibility between Ramsey-type theorems and well-ordering principles at the level of $\Sigma^0_2$-induction: A pilot study}
\author[1]{Lorenzo Carlucci}
\author[2]{Giordano Celli}
\affil[1,2]{Sapienza University of Rome, Department of Mathematics}
\affil[1]{lorenzo.carlucci@uniroma1.it}
\affil[2]{celli.1873376@studenti.uniroma1.it}
\begin{document}

\maketitle

\begin{abstract}
We study the relations under Weihrauch reducibility of the well-ordering preservation principle for the 
operator $X \mapsto X^\omega$ and the Ordered Ramsey Theorem. Both principles are known to be 
equivalent to $\Sigma^0_2$-induction in Reverse Mathematics. 

We show that the Ordered Ramsey Theorem is Weihrauch-equivalent to the parallel product of 
the well-ordering preservation principle for the 
operator $X \mapsto X^\omega$ and the Eventually Constant Tail principle. 

By previous work from Pauly, Pradic and Soldà, the Ordered Ramsey Theorem is known to be Weihrauch-equivalent to the parallel product of 
the Eventually Constant Tail principle and the parallelization of the jump of the Limited Principle of 
Omniscience. We show that the latter pinciple and the well-ordering preservation principle for $X \mapsto X^\omega$ are Weihrauch-incomparable. 
\end{abstract}

\section{Introduction and Motivation}

A {\em well-ordering preservation principle} (or {\em well-ordering principle} -- WOP for short) states that some operator on linear orderings preserves well-ordering. A {\em Ramsey-type theorem} states that some class of colorings is partition-regular. Both families of principles are well investigated in Reverse Mathematics \cite{Dza-Mum:22}. Many prominent systems can be characterized in terms of WOPs (see \cite{Rat-Wei:11, Mar-Mon:11}) and of Ramsey-type theorems (see \cite{Dza-Mum:22}). Usually the equivalence of the latter two is obtained as an indirect by-product. We refer to \cite{Car-Mai-Zda:25} for examples and discussion. 

In the present paper we study from the viewpoint of Weihrauch reducibility the well-ordering principle for the operator $X \mapsto X^\omega$ and the Ordered Ramsey Theorem. This is one way of making precise the question from \cite{Car-Zda:12} whether {\em direct proofs} exist, in one direction or the other, between Ramsey-type principles and WOPs.

The motivation for the present work is twofold. First, we continue the line of research initiated in \cite{Car-Mai-Zda:25} concerning the possibility of establishing Weihrauch reductions of well-ordering preservation principles to combinatorial theorems from Ramsey Theory. For example it is shown in \cite{Car-Mai-Zda:25} that the WOP for the operator $X \mapsto \omega^X$ is Weihrauch reducible to Ramsey's Theorem for $2$-colorings of $3$-subsets of $\Nat$ as well as to Hindman's Theorem restricted to finite sums of $3$ terms. The initial motivation for the present work comes from the observation in \cite{Car-Mai-Zda:25} that a similar proof shows that the WOP for the operator $X \mapsto X^\omega$ reduces to Ramsey's Theorem for finite colorings of $2$-subsets. The latter result is not optimal from the point of view of Reverse Mathematics, since the WOP in question is known to be equivalent to $\Sigma^0_2$-induction \cite{Uft:23}, while Ramsey's Theorem for pairs and arbitrarily many colors implies the stronger principle of $\Sigma^0_3$-bounding \cite{Hir:87}.  It was therefore natural to ask whether the same proof-technique from \cite{Car-Mai-Zda:25} could be applied to reduce the WOP for $X \mapsto X^\omega$ to a Ramsey-type statement known to be equivalent to $\Sigma^0_2$. The Ordered Ramsey Theorem introduced in \cite{KMPS:19} is one such candidate. The fact that the latter principle was studied from the point of view of Weihrauch complexity in \cite{Pau-Pra-Sol:24} has been our main motivation for focusing, among the known combinatorial principles equivalent to $\Sigma^0_2$-induction, on the Ordered Ramsey Theorem. The analysis in~\cite{Pau-Pra-Sol:24} gave some indication on how to obtain reductions in the  direction, from the WOP for $X\mapsto X^\omega$ to the Ordered Ramsey Theorem. Note that reductions {\em from} WOPs {\em to} Ramsey-type statements are not to be found in the previous literature.

\section{Background}

We introduce the needed technical notions, the main principles of interest and the relevant results from previous literature. 

The principles considered in this paper can be expressed as {\em instance-solution problems}, i.e.~as partial functions $\PP: A \to \mathcal{P}(B)$ for some sets $A$ and $B$. The elements of the domain of $\PP$ are called {\em instances} of $\PP$. If $a\in A$ is an instance of $\PP$ then each $b \in \PP(a)\subseteq B$ is called a {\em solution of} $\PP$ {\em to} $a$, or a $\PP$-solution to $a$. It is well-kown that $\forall\exists$ theorems of the form $\forall x (F(x) \to \exists y G(x,y))$ of interest in Reverse Mathematics can be naturally expressed as problems (see \cite{Dza-Mum:22}. 

\subsection{Weihrauch reducibility}

Weihrauch reducibility has become in later years of major interest in Computable Mathematics and in Reverse Mathematics. We give the basic definitions.

\begin{definition}[(strong) Weihrauch reducibility]
Let $\PP$ and $\Q$ be problems. 
\begin{enumerate}
\item $\PP$ is Weihrauch-reducible to $\Q$ (denoted $\PP \leq_\W \Q$), if there exist Turing functionals $H$ and $K$ such that: 
if $X$ is any $\PP$-instance then $H(X)$ is a $\Q$-instance, and if $\hat{Y}$ is any $\Q$-solution to $H(X)$ then $K(X \oplus \hat{Y})$
is a $\PP$-solution to $X$.
\item $\PP$ is strongly Weihrauch-reducible to $\Q$ (denoted $\PP \leq_\sW \Q$), if there exist Turing functionals $H$ and $K$ such that: 
if $X$ is any $\PP$-instance then $H(X)$ is a $\Q$-instance, and if $\hat{Y}$ is any $\Q$-solution to $H(X)$ then $K(\hat{Y})$
s a $\PP$-solution to $X$.
\end{enumerate}
\end{definition}

We abbreviate by $\PP \equiv_\W \Q$ the fact that $\PP \leq_\W \Q$ and $\Q \leq_\W \PP$. Similarly for $\PP \equiv_\sW \Q$. 
We abbreviate by $\PP <_W \Q$ the fact that $\PP \leq_\W \Q$ and $\Q \nleq_\W \PP$. Similarly for $<_\sW$. 

Some of our results are expressed in terms of a number of structural operations on problems, which respect the quotient to Weihrauch degrees. 

\begin{definition}[Parallel product and finite parallelization]
Let $\PP$ and $\Q$ be problems. 
\begin{enumerate}
\item The parallel product of $\PP$ and $\Q$ (denoted $\PP\times \Q$) is the problem whose instances are all pairs $(X_0, X_1)$ 
where $X_0$ is a $\PP$-instance and $X_1$ is a $\Q$-instance, with the solutions being all pairs $(Y_0, Y_1)$ such that $Y_0$ is a $\PP$-solution to $X_0$ and $Y_1$ is a $\Q$-solution to $X_1$.
\item The finite parallelization of a problem $\PP$ (denoted $\PP^*$), has the power to solve an arbitrary finite number of instances of $\PP$, provided that number is given as part of the input.
\end{enumerate}
\end{definition}

For every problem $\PP$ and $n\geq 1$ we write $\PP^n$ to mean $\underbrace{\PP \times \dots \times \PP}_{n \small{\mbox{ times}}}$.

The following problems have emerged as useful benchmarks in Weihrauch Complexity. 

\begin{enumerate}
\item $\TC_\Nat: \subseteq \Nat^\Nat \Rightarrow \Nat$ (totalization of closed choice on N) is the problem that takes as input an enumeration e of any subset of $\Nat$ (hence now we allow the possibility that $ran(e) = \Nat$) and such that, for every $n \in \Nat$, $n \in TCN(e)$ if and only if $n  \notin ran(e)$ or $ran(e) = \Nat$.

\item $\LPO: 2^\Nat \to \{0,1\}$ (limited principle of omniscience) takes as input any infinite binary string $p$ and outputs $0$ if and only if $p = 0^\Nat$. 

\item $\LPO': \subseteq 2^\Nat \to \{0,1\}$: takes as input (a code for) an infinite sequence $\langle p_0,p_1,\dots\rangle$ of binary strings such that the function $p(i) = \lim_{s\to\infty} p_i(s)$ is defined for every $i \in \Nat$, and outputs $\LPO(p)$.

\item Define the problem $\IsFinite_\mathbb S$ as follows: an instance of $\IsFinite_\mathbb S$ is a binary sequence $p\in 2^\mathbb N$, and the solution $\IsFinite_\mathbb S(p)\in \mathbb S$ for $p$ is $\top$ if $p$ contains finitely many occurrences of $1$, and $\bot$ otherwise. Here, $\mathbb S$ denotes the Sierpinski space $\{\top, \bot\}$, where $\top$ is coded by binary sequences containing at least one occurrence of $1$, and $\bot $ is coded by $0^\mathbb N$.

\end{enumerate}

The notation $\LPO'$ is justified by the fact that the definition of $\LPO'$ is a special case of the definition of jump as given in \cite{Bra-Ghe-Pau:21} applied to $\LPO$. As noted in \cite{Pau-Pra-Sol:24}, $\LPO'$ corresponds to the power of answering a single binary $\Sigma^0_2$
-question.

\subsection{Well-ordering principles}

A well-ordering preservation principle states that some operator on linear orders preserves well-ordering. In the present paper we are interested in a particular WOP based on the operator $X \mapsto X^\omega$ corresponding to ordinal exponentiation to the power $\omega$. When referring to a linear ordering $\mathcal{X} = (X, \leq_X)$ we systematically use $X$ also for $(X, \leq_X)$. 

The following definition is from \cite{Hir:94}. 

\begin{definition}[The operator $X \mapsto X^\omega$]
If $X$ is a linear ordering we define $X^\omega$ as the set of all finite sequences of the form $\langle (b_0, a_0), \dots, (b_n,a_n)\rangle$
such that  for all $i \leq n$, $b_i \in \Nat$, $b_0 > b_1 > \dots > b_n$ and $0 \neq a_i \in X$. We let $<_{X^\omega}$ be the lexicographic ordering on $X^\omega$: If $\sigma$ extends $\tau$ then $\sigma >_{X^\omega} \tau$. If $j$ is least integer such that $(b_j, a_j) = \sigma_j \neq \tau_j = (b'_j, a_j)$ and either $b_j > b'_j$ or both $b_j = b'_j$ and $a_j >_X a'_j$ then $\sigma> \tau$. Otherwise $\tau >_{X^\omega} \sigma$.
\end{definition}

If $X$ is an ordinal $\alpha$, the element $\langle (b_0, a_0), \dots, (b_n,a_n)\rangle$ of $\alpha^\omega$ corresponds to the ordinal 
$\alpha^{b_0}\cdot a_0 + \dots + \alpha^{b_n}\cdot a_n$. 

In the Reverse Mathematics literature $\WOP(X\mapsto X^\omega)$ denotes the principle $\forall X (\WO(X) \to \WO(X^\omega))$ where $\WO(X)$ is any canonical $\Pi^1_1$-formula expressing that $X$ is well-ordered. Uftring \cite{Uft:23} proved that this principle is 
equivalent to $\Sigma^0_2$-induction. 

As patent from the definition of $X \mapsto X^\omega$, we can view the elements of $X^\omega$ as {\em terms} involving elements from $X$ as {\em sub-terms}. If $\sigma$ is a sequence in $X^\omega$ and $\sigma'$ is a sequence in $X$, we say that $\sigma'$ {\em is contained in} $\sigma$ if $\sigma'$ consists of sub-terms of elements of $\sigma$ in the same relative order, i.e.: $\sigma'_i$ is a sub-term of $\sigma_{t_i}$ for some $t_i$ and if $i < j$ then $t_i \leq t_j$. The notion makes sense for all other WOPs considered in this paper, which are based on operators weaker than $X \mapsto X^\omega$, as well as for most WOPs considered in the Reverse Mathematics literature. 

In order to study the Weihrauch complexity of WOPs we give the following definition.

\begin{definition}[WOP Problem]
Let $F$ be an operator on linear orders. The problem $\WOP(X\mapsto F(X))$ is the problem that takes as an infinite decreasing sequence $\sigma$ in $F(X)$ and outputs an infinite decreasing sequence $\sigma'$ in $X$ contained in $\sigma$. 
\end{definition}

\subsection{Ramsey-type theorems}

We here introduce the Ramsey-type principles of interest to the present paper, i.e., the Ordered and the Additive Ramsey Theorems. 

Fix a linear order $(X,<_X)$. For every poset $(P,<_P)$, we call a colouring $c : [X]^2 \to P$ 
{\em right-ordered} if we have $c(x,y) \leq_P c(x,y')$ when $x <_X y \leq_X y'$. 

\begin{definition}[Ordered Ramsey Theorem]
The problem $\ORT_\Nat$ takes as input a finite poset $(P,\leq_P)$ and a right-ordered colouring $c : [\Nat]^2 \to P$,
and outputs an infinite $c$-homogeneous set $\subseteq\Nat$.
\end{definition}

\begin{definition}[Additive Ramsey Theorem]
The problem $\ART_\Nat$ takes as input a finite semigroup $S$ and an additive colouring $c : [\Nat]^2 \to S$, and
outputs an infinite $c$-homogeneous set $\subseteq\Nat$.
\end{definition}

In Reverse Mathematics the $\forall\exists$ theorems naturally corresponding to $\ORT_\Nat$ and $\ART_\Nat$ in Second-Order Arithmetic  were introduced in \cite{KMPS:19} and proved equivalent to $\Sigma^0_2$-induction over $\RCA_0$. The Weihrauch complexity of the problems $\ORT_\Nat$ and $\ART_\Nat$ is studied in \cite{Pau-Pra-Sol:24}.

Finally we will make use of the principle called Eventually Constant Tail, introduced in \cite{DHHPPK:20}. 

\begin{definition}[Eventually Constant Tail]
$\ECT$ is the problem whose instances are pairs $(n,f) \in \Nat \times \Nat^\Nat$ such that $f :\Nat \to n$ is a colouring of the natural numbers with $n$ colours, and such that, for every instance $(n,f)$ and $b \in \Nat$, $b \in \ECT(n,f)$ if and only if $\forall x >b\exists y >x(f(x) =f(y))$. 
\end{definition}

In other words, $\ECT$ is the problem that, upon being given a function $f$ of the integers with finite range, outputs a $b$ such that, after that $b$, the palette of colours used is constant (hence its name, which stands for eventually constant palette tail). In Reverse Mathematics the principle corresponding to $\ECT$ is equivalent to $\Sigma^0_2$-induction \cite{DHHPPK:20}.

\subsection{Known relations and overview of the paper}

The following was recently proved by Pauly, Pradic and Soldà \cite{Pau-Pra-Sol:24}.

\begin{theorem}[Pauly, Pradic, Soldà]\label{thm:pauly}
$\ORT_\Nat \equiv_\W \ART_\Nat \equiv_\W \ECT \times (\LPO')^*.$
\end{theorem}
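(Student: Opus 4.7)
The plan is to establish the theorem via the cycle of reductions
$\ART_\Nat \leq_\W \ORT_\Nat \leq_\W \ECT \times (\LPO')^* \leq_\W \ART_\Nat$.
The equivalence $\ART_\Nat \equiv_\W \ORT_\Nat$ should come from direct combinatorial translation. For $\ART_\Nat \leq_\W \ORT_\Nat$, I would equip the finite semigroup $S$ with the divisibility preorder $s \leq_P t$ iff $s \cdot u = t$ for some $u$ (adjoining a formal identity if needed) and verify that additivity $c(x,y) \cdot c(y,z) = c(x,z)$ forces $c(x,y) \leq_P c(x,z)$ whenever $y \leq z$, which is exactly right-orderedness; any infinite homogeneous set for the right-ordered problem returns a homogeneous set for the additive one. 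For the reverse direction $\ORT_\Nat \leq_\W \ART_\Nat$, given a right-ordered $c : [\Nat]^2 \to P$ I would lift the values of $c$ into a finite semigroup whose multiplication reflects the poset join in $P$ and transfer the coloring, so that a homogeneous set for the induced additive coloring yields one for $c$.

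For the upper bound $\ORT_\Nat \leq_\W \ECT \times (\LPO')^*$, I would exploit the fact that right-orderedness makes each column $y \mapsto c(x,y)$ monotone non-decreasing in the finite poset $P$, so the limit $f(x) = \lim_y c(x,y)$ exists and is $\Sigma^0_2$ uniformly in $c$ and $x$. Using $|P|$-many parallel $\LPO'$ queries I can read off $f$ on any bounded initial segment, and using $\ECT$ applied to an effectively defined finite-range surrogate of $f$ (for instance the coloring $x \mapsto c(x,x+1)$, or a projection onto a maximal chain of $P$) I can locate a threshold $b$ past which the palette of $f$ has stabilised. A further batch of $\LPO'$ calls, one per candidate color, then identifies a specific $p \in P$ realised infinitely often by $f$; a plain computable search above $b$ then greedily produces an infinite $c$-homogeneous set of color $p$. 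Because $|P|$ is part of the input, the total number of parallel $\LPO'$ queries is known in advance, which is exactly what the finite parallelization $(\LPO')^*$ demands.

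The hardest step, and the one I would expect to require the most care, is the lower bound $\ECT \times (\LPO')^* \leq_\W \ORT_\Nat$. The plan is to encode jointly an $\ECT$-instance $(n,g)$ and a tuple $\langle q_0,\dots,q_{k-1}\rangle$ of $\LPO'$-instances as a single right-ordered coloring $c$ on a poset $P$ whose elements are pairs consisting of a ``palette-seen-so-far'' component (for the $\ECT$-part) and a bit-vector recording the current approximations to $\lim_s q_i(s)$ for each $i < k$; the order is the product order, so refinement in either coordinate strictly increases the color, and right-orderedness follows because both components are monotone non-decreasing in the second argument of $c$. The key verification is that any infinite $c$-homogeneous set stabilises both coordinates and therefore certifies simultaneously a correct $\ECT$-threshold $b$ for $g$ and the correct $\Sigma^0_2$-answers for each $q_i$, and that this decoding is uniform in $n$ and $k$. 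Making the joint encoding genuinely right-ordered while faithfully representing all the $\LPO'$ components in parallel, and keeping the decoding functional uniform in the input size, is the technical core where I expect the methods of \cite{Pau-Pra-Sol:24} to be indispensable.
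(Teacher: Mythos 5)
You should first note that the paper contains no proof of Theorem \ref{thm:pauly} to compare against: it is imported verbatim as a known result of Pauly, Pradic and Sold\`a from \cite{Pau-Pra-Sol:24} and used as a black box (only the fragment $\ECT\leq_{\sW}\ORT_\Nat$, their Lemma 52, is ever invoked). So your sketch can only be measured against the original argument, and as it stands it has two genuine gaps. The most serious is the step $\ART_\Nat\leq_\W\ORT_\Nat$ via the divisibility relation: divisibility in a finite semigroup is only a preorder, and antisymmetry fails badly (in a finite group every element divides every other), whereas $\ORT_\Nat$ requires a finite poset. If you repair this by passing to the quotient poset, the transfer of homogeneity breaks: for $S=\mathbb{Z}/2\mathbb{Z}$ and the additive colouring $c(x,y)=(y-x)\bmod 2$, the divisibility preorder identifies the two colours, so every infinite set is homogeneous for the quotient colouring, yet a set such as $\{0,1,4,5,8,9,\dots\}$ is not $c$-homogeneous. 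Handling additive colourings genuinely uses the semigroup structure (Green's relations, idempotents), not merely an order on the colours, and this is where the real work in \cite{Pau-Pra-Sol:24} (and in the Ramseyan factorisation literature it builds on) lies.

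The second gap is in the direction $\ECT\times(\LPO')^*\leq_\W\ORT_\Nat$: your poset coordinate recording ``the current approximations to $\lim_s q_i(s)$'' is not monotone in the second argument of the colouring, because the stagewise approximations to a $\Sigma^0_2$ limit flip back and forth; hence the colouring you describe is not right-ordered and the claim that ``refinement in either coordinate strictly increases the colour'' fails. One needs a monotone surrogate for the limit behaviour (suitably ordered stabilisation or mind-change data), and designing it so that the homogeneous colour decodes the $\LPO'$ answers is precisely the nontrivial content of the original construction; by contrast, your palette-of-colours-seen-in-an-interval idea for the $\ECT$ half is sound and is essentially their Lemma 52. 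Finally, in the upper bound $\ORT_\Nat\leq_\W\ECT\times(\LPO')^*$, remember that the $\ECT$-instance must be computable from $c$: the limit colouring $f(x)=\lim_y c(x,y)$ is not computable, the surrogate $x\mapsto c(x,x+1)$ need not track the palette of $f$, and ``$|P|$-many $\LPO'$ queries to read off $f$ on any bounded initial segment'' does not typecheck, since in $(\LPO')^*$ all queries (and their number) must be produced from the instance in advance and cannot depend on a threshold obtained later from $\ECT$. So the overall decomposition is the right one, but each of the three reductions needs a substantially different argument from the one you outline.
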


As mentioned above, in Reverse Mathematics the principles corresponding to $\ORT_\Nat$, $\ART_\Nat$ and $\ECT$ are 
all equivalent to $\Sigma^0_2$ induction \cite{DHHPPK:20}. 

Other known results concerning the Weihrauch complecity of principles of interest in the present paper are the following. 
The first concerns the relation between $\ECT$ and $\TC_\Nat$ and is \cite[Theorem 9]{DHHPPK:20}.

\begin{lemma}[Davis et alii]\label{lem:ect_tc}
$ \ECT \equiv_\W (\TC_\Nat)^*.$
\end{lemma}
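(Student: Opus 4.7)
The plan is to prove the two reductions $\ECT \leq_\W (\TC_\Nat)^*$ and $(\TC_\Nat)^* \leq_\W \ECT$ separately, in each case by an explicit construction matching the finite palette of one problem to the finite arity of the other.

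For $\ECT \leq_\W (\TC_\Nat)^*$, I plan to take an $\ECT$-instance $f : \Nat \to n$ and, in parallel, run one $\TC_\Nat$-instance per colour $i < n$. The $i$-th instance enumerates the set $A_i = \{x : \exists y \geq x,\ f(y) = i\}$, produced uniformly from $f$ by enumerating $[0,y]$ into $A_i$ each time $f(y) = i$ is observed. One checks that $A_i$ is either $\Nat$ (if colour $i$ appears infinitely often in $f$) or the initial segment $[0, \max f^{-1}(i)]$ (otherwise), so that the value $n_i$ returned by $\TC_\Nat$ is either arbitrary or strictly above the last occurrence of $i$. The backward functional outputs $b = \max_{i<n} n_i$: a colour $i$ cannot then appear at any $x > b$ unless $i$ appears infinitely often, which is exactly the $\ECT$ condition.

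For $(\TC_\Nat)^* \leq_\W \ECT$, I would first treat a single $\TC_\Nat$-instance $e$. Letting $m(x) = \min(\Nat \setminus \{e(0), \dots, e(x-1)\})$, I define the binary coloring $f_e(x) = 1$ iff $m(x) > m(x-1)$. If $\mathrm{ran}(e) \neq \Nat$, then $m(x)$ eventually stabilizes at $\min(\Nat \setminus \mathrm{ran}(e))$ and $f_e$ is eventually $0$; if $\mathrm{ran}(e) = \Nat$, then $m(x) \to \infty$ and $f_e$ takes value $1$ infinitely often. In the first case any $\ECT$-answer $b$ to $f_e$ must lie past the last occurrence of $1$, so $m(b+1)$ is forced to lie outside $\mathrm{ran}(e)$; in the second case any output is admissible. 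For arbitrary $k$, I combine $f_{e_1}, \dots, f_{e_k}$ into the product coloring $f : \Nat \to \{0,1\}^k$ and feed it to $\ECT$. Since the number of colours $2^k$ is part of the $\ECT$-input, the construction is uniform in $k$. A single answer $b$ is also an $\ECT$-answer for each marginal $f_{e_i}$: if $f_{e_i}(x) = 1$ for some $x > b$, the tuple $f(x)$ must recur infinitely often past $b$, forcing coordinate $i$ to be $1$ infinitely often, contradicting the eventual zeroness of $f_{e_i}$. The backward functional then applies $m_i(b+1)$ coordinatewise.

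The step that requires the most care is the projection used in the second reduction: stability of the product palette past $b$ must force stability of each individual marginal, and this works only because each $f_{e_i}$ is specifically designed to be eventually constant in the non-trivial case, rather than because of any generic parallelization principle for $\ECT$. Once this projection is secured, both reductions are straightforward computable constructions and combine to give the stated Weihrauch-equivalence.
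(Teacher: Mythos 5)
Your proof is correct in both directions, but note that the paper does not prove this lemma at all: it is imported as a black box from Davis et alii (cited as Theorem 9 of \cite{DHHPPK:20}), so what you have written is a self-contained proof of a quoted ingredient rather than an alternative to an argument in the paper. Your constructions are the natural ones: for $\ECT \leq_\W (\TC_\Nat)^*$, one enumeration per colour whose range is $[0,\max f^{-1}(i)]$ when colour $i$ occurs finitely often and all of $\Nat$ otherwise, so that $b=\max_i n_i$ lies past the last occurrence of every finitely occurring colour; for $(\TC_\Nat)^* \leq_\W \ECT$, the marker colouring $f_e(x)=1$ iff $m(x)>m(x-1)$ with $m(x)=\min(\Nat\setminus\{e(0),\dots,e(x-1)\})$, so that stabilisation of $f_e$ past $b$ forces $m(b+1)=\min(\Nat\setminus\mathrm{ran}(e))$. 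One conceptual remark: the step you single out as delicate, namely that an $\ECT$-bound for the product colouring is an $\ECT$-bound for each marginal, is in fact generic and does not depend on your $f_{e_i}$ being eventually constant in the non-trivial case. If $b$ satisfies $\forall x>b\,\exists y>x\,(f(x)=f(y))$ for the joint colouring, then equality of tuples already gives $f_{e_i}(x)=f_{e_i}(y)$ for every coordinate, so the same $b$ is a solution for each $f_{e_i}$ outright; this generic closure under finite products is precisely what yields $(\ECT)^*\equiv_\W\ECT$, a fact the paper itself invokes. Beyond that, only harmless representation details remain (the convention allowing $e_i$ to enumerate the empty set when colour $i$ never occurs, the value $f_e(0)$, and coding $\{0,1\}^k$ as $2^k$ colours, which is uniform since $k$ is part of the $(\TC_\Nat)^*$-input), so the stated equivalence follows as you argue.
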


The following shows that two of the benchmark problems above, namely $(\LPO')^*$ and $(\TC_\Nat)^*$
are Weihrauch-incomparable. It is established as \cite[Lemma 9]{Pau-Pra-Sol:24}.

\begin{lemma}[Pauly, Pradic, Soldà]
$(\LPO')^*$ and $(\TC_\Nat)^*$ are Weihrauch incomparable.
\end{lemma}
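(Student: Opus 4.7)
The plan is to prove incomparability by establishing both non-reductions via dedicated adversary/diagonalization arguments. Since $\LPO' \leq_\W (\LPO')^*$ and $\TC_\Nat \leq_\W (\TC_\Nat)^*$, it suffices to rule out the stronger reductions $\LPO' \leq_\W (\TC_\Nat)^*$ and $\TC_\Nat \leq_\W (\LPO')^*$.

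First I would show $\LPO' \nleq_\W (\TC_\Nat)^*$. Assume Turing functionals $H,K$ witness the reduction. I would construct an $\LPO'$-instance $p = \langle p_0, p_1, \ldots \rangle$ stage by stage, together with adversarial answers to the parallel $\TC_\Nat$-queries produced by $H$. At each stage, having committed to a finite portion of $p$, one simulates the initial segments of the $\TC_\Nat$-instances that $H$ has so far generated and answers each of them by a natural number chosen larger than every value enumerated so far — this is legal precisely because $\TC_\Nat$ is the \emph{totalized} version of closed choice, so any non-enumerated number is a valid response. One then reads off the bit that $K$ is committing to and extends $p$ in sufficiently late coordinates so as to flip the true value of $\LPO'(p)$, obtaining a contradiction.

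Second I would show $\TC_\Nat \nleq_\W (\LPO')^*$. Assume again that $H,K$ witness the reduction. The key observation is that for any fixed $\TC_\Nat$-instance $e$, the $(\LPO')^*$-instance $H(e)$ specifies a finite number $n(e)$ of parallel $\LPO'$-queries, and hence the set $\{K(e,y) : y \in 2^{n(e)}\}$ of possible post-processor outputs is finite, with at most $2^{n(e)}$ elements. I would then construct $e$ by recursion so that at each stage every natural number currently consistent with being a reduction output is enumerated into $\mathrm{ran}(e)$, forcing the eventual reduction output to lie inside $\mathrm{ran}(e)$ and thereby violate the $\TC_\Nat$-specification.

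The main obstacle, common to both halves, is that any diagonal modification of the input also perturbs the Turing-functional output of $H$: in the first argument this might retroactively invalidate $\TC_\Nat$-answers already chosen, while in the second it can inflate $n(e)$ unboundedly. The resolution in both cases rests on the continuity of $H$ — each bit of $H(\cdot)$ depends only on a finite portion of its input — combined with a ``late-commitment'' extension strategy in the first case, and a König-style compactness/fixed-point argument in the second. I expect the second diagonalization to be the more delicate, since it must simultaneously control the growth of $n(e)$ and ensure that the enumeration covers all potential outputs.
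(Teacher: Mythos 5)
First, note that the paper does not prove this lemma at all: it is quoted verbatim from Pauly--Pradic--Sold\`a (\cite[Lemma 9]{Pau-Pra-Sol:24}), so any self-contained argument is necessarily a different route from ``the paper's proof''. Your second half, $\TC_\Nat \nleq_\W (\LPO')^*$, is essentially the known argument and is sound: by continuity the first component $n$ of $H(e)$ stabilizes on a finite prefix, the candidate answers range over the finite set $2^{n}$, so only finitely many outputs $K(e\oplus y)$ can ever appear; enumerating exactly those observed outputs into $\mathrm{ran}(e)$ (and nothing else, so that $\mathrm{ran}(e)\neq\Nat$ -- a point you should state explicitly, since enumerating ``every number currently consistent'' could otherwise be read as making the range all of $\Nat$ and voiding the contradiction) defeats the reduction.

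The first half, $\LPO' \nleq_\W (\TC_\Nat)^*$, has a genuine gap. Your adversary's answers are \emph{not} legal in the sense you need: a number avoiding everything enumerated ``so far'' is a valid $\TC_\Nat$-solution only if it is never enumerated into the completed instance (or the completed range is all of $\Nat$). Since $H$'s instances keep growing as you extend $p$, the very extension you perform to flip $\LPO'(p)$ may enumerate your chosen answers, in which case $K$'s committed bit was computed on a non-solution and no contradiction follows. Continuity of $H$ plus ``late commitment'' does not repair this: continuity only freezes what $H$ and $K$ have already output, not the future validity of your answers; and iterating the flip runs into the convergence requirement on $\LPO'$-instances while still never guaranteeing that any one committed answer tuple is a solution for the limit instance. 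The known proof avoids exactly this: one replaces $\LPO'$ by the weaker $\IsFinite_\mathbb S \leq_\W \LPO'$ and $(\TC_\Nat)^*$ by the Weihrauch-equivalent $\ECT$ (Lemma \ref{lem:ect_tc}), whose solution sets are upward closed and cofinal, so that answers can be chosen robustly and computations for the ``finitely many ones'' and ``infinitely many ones'' inputs can be spliced against each other; this is the argument of \cite[Lemma 8]{Pau-Pra-Sol:24}, of which Proposition \ref{isfinite_wop} in this paper is a variation. To fix your proof, either route through $\ECT$ in this way, or supply a substitute structural property of $(\TC_\Nat)^*$-solutions that survives arbitrary extensions of the input; as written, the claimed contradiction does not materialize.
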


Note that the previous lemma implies that $(\LPO')^* <_\W (\LPO')^* \times
(\TC_\Nat)^*$ and $(\TC_\Nat)^* <_\W (\LPO')^* \times (\TC_\Nat)^*$, since $\PP, \Q \leq_\W \PP \times \Q$.

Our main result is the following.

\begin{theorem}\label{thm:main}
$\ORT_\Nat \equiv_\W \ECT \times \WOP(X\mapsto X^\omega)$.
\end{theorem}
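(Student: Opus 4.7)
The plan is to prove the theorem by two cross-reductions, relying on Theorem~\ref{thm:pauly}. Since Lemma~\ref{lem:ect_tc} together with the closure of finite parallelization under parallel products yields $\ECT \times \ECT \equiv_\W \ECT$, and since $\ORT_\Nat \equiv_\W \ECT \times (\LPO')^*$ yields $\ORT_\Nat \times \ORT_\Nat \equiv_\W \ORT_\Nat$, it suffices to establish \textbf{(a)} $\WOP(X\mapsto X^\omega) \leq_\W \ORT_\Nat$ and \textbf{(b)} $(\LPO')^* \leq_\W \ECT \times \WOP(X\mapsto X^\omega)$. From (a) combined with $\ECT \leq_\W \ORT_\Nat$ (immediate from Theorem~\ref{thm:pauly}) one obtains $\ECT \times \WOP(X\mapsto X^\omega) \leq_\W \ORT_\Nat$; from (b) combined with $\ECT \leq_\W \ECT$ one obtains $\ORT_\Nat \equiv_\W \ECT \times (\LPO')^* \leq_\W \ECT \times \WOP(X\mapsto X^\omega)$.

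For \textbf{(a)}, given an infinite descending sequence $\sigma_0 >_{X^\omega} \sigma_1 >_{X^\omega} \cdots$ with $\sigma_i = \langle (b_0^i,a_0^i),\ldots,(b_{n_i}^i,a_{n_i}^i)\rangle$, I set $K := b_0^0$ and write $p(i,j)$ for the first tuple-position at which $\sigma_i$ and $\sigma_j$ differ. Since the exponents along any $\sigma_i$ form a strictly decreasing sequence of naturals bounded by $b_0^i \leq K$, every $\sigma_i$ has length at most $K+1$, whence $p(i,j) \leq K$ throughout. A short exchange argument shows that $p(i,\cdot)$ is non-increasing: were $j_1 < j_2$ with $p(i,j_1) < p(i,j_2)$, then $\sigma_{j_2}$ would inherit $\sigma_i$'s value at position $p(i,j_1)$, which strictly dominates $\sigma_{j_1}$'s value there, forcing $\sigma_{j_2} > \sigma_{j_1}$, contrary to $j_1 < j_2$. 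I then take the finite poset $P := \{(k,b) \in \Nat^2 : 0 \leq k,b \leq K\}$ with $(k_1,b_1) \leq_P (k_2,b_2)$ iff $(k_1,b_1)=(k_2,b_2)$ or $k_1 > k_2$ (so distinct elements sharing the same first coordinate are incomparable) and put $c(i,j) := (p(i,j),\, b_{p(i,j)}^i)$. The above observations yield that $c$ is right-ordered. For any infinite $c$-homogeneous set $H$ of color $(k,b)$ one has $b_k^i = b$ for every $i \in H$ (apply homogeneity to pairs $(i,j')$ with $j' > i$ in $H$), so that for $i < j$ in $H$ the inequality $(b,a_k^i) >_{\Nat \times X} (b,a_k^j)$ forces $a_k^i >_X a_k^j$. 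The sequence $\langle a_k^i \rangle_{i \in H}$ is the desired infinite descending sequence in $X$ contained in $\sigma$.

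For \textbf{(b)}, given an instance $\langle q^{(0)},\ldots,q^{(n-1)}\rangle$ of $(\LPO')^*$, I plan to construct a linear order $X = X(n)$ consisting of $2^n$ disjoint ``branches'', each isomorphic to $\omega^*$ and indexed by the possible answer vectors $v \in \{0,1\}^n$, arranged so that every infinite descending sequence in $X(n)$ eventually stabilizes in a single branch. An infinite descending sequence $\sigma$ in $X(n)^\omega$ is then constructed in stages by a wait-and-switch strategy: at each stage $t$, the finite evidence accumulated from $q^{(0)}|_{[0,t]},\ldots,q^{(n-1)}|_{[0,t]}$ determines a current best-guess vector $v(t) \in \{0,1\}^n$, and $\sigma$ is arranged so that every infinite descending sub-sequence contained in $\sigma$ eventually enters the branch labelled $(r_0,\ldots,r_{n-1})$, where $r_j := \LPO(\lim q^{(j)})$. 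A single $\ECT$-call, applied to the finite-valued projection onto $\{0,1\}^n$ of the $\WOP$-solution, then identifies the stable branch label and returns the full answer vector.

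The central obstacle is direction \textbf{(b)}: while (a) admits a clean finite-poset coloring argument, (b) requires encoding $n$ independent $\Sigma^0_2$-queries into a single $\WOP$-instance whose extracted descending sequence is adversarial. Ensuring that every valid such sequence contained in $\sigma$ concentrates on the branch labelled by the true answer vector --- even though that vector is only determined in the limit by the $\Sigma^0_2$-contents of the $q^{(j)}$'s --- and that a single $\ECT$-call suffices to read off the label uniformly in $n$, is the technical heart of the argument.
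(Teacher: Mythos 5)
Your lower-bound half is fine: the reduction $\WOP(X\mapsto X^\omega)\leq_\W \ORT_\Nat$ via the ``first position of disagreement'' colouring is essentially the paper's argument (the paper colours with the position alone, ordered reversely, and handles the prefix case explicitly; your variant recording the exponent as well works, and your idempotence argument $\ORT_\Nat\times\ORT_\Nat\equiv_\W\ORT_\Nat$ via Theorem~\ref{thm:pauly} is a legitimate replacement for the paper's Lemma~\ref{ort_star}).

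The upper bound, however, is where the theorem's real content lies, and there your proposal has a genuine gap --- in fact two. First, what you offer for (b) is only a plan: the order $X(n)$ with $2^n$ branches, the wait-and-switch sequence $\sigma$, and the verification that \emph{every} descending sequence contained in $\sigma$ (an adversary may pick arbitrary sub-terms) stabilises in the branch labelled by the true answer vector are all left unconstructed; you yourself flag this as ``the technical heart,'' so nothing is actually proved. Second, and more structurally, your plan applies the $\ECT$-call ``to the finite-valued projection of the $\WOP$-solution.'' In a Weihrauch reduction to the \emph{parallel product} $\ECT\times\WOP(X\mapsto X^\omega)$ both instances must be computed by the forward functional from the $(\LPO')^*$-input alone; you may not feed the $\WOP$-solution into the $\ECT$-instance. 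As described, your scheme would at best yield a reduction to the compositional product $\ECT\star\WOP(X\mapsto X^\omega)$, which does not give the theorem (and the necessity of getting this right is underscored by the paper's Corollary~\ref{lpo'_wop}, showing $(\LPO')^*\nleq_\W\WOP(X\mapsto X^\omega)$, so the $\ECT$ side cannot be dispensed with or used loosely). The paper avoids both problems by a different decomposition: it proves directly that $\ORT_\Nat[n]\leq_{\sW}(\ECT)^n\times\WOP(X\mapsto X^n_{lex})$, where the $n$ many $\ECT$-instances are the ``improvement'' indicator sequences computed from the colouring itself (in parallel with the $\WOP$-instance, a descending sequence in a four-factor product order built from the improvement dynamics), and then uses $\WOP(X\mapsto X^n_{lex})\leq_{\sW}\WOP(X\mapsto X^\omega)[\leq 2n]$ and $(\ECT)^*\equiv_\W\ECT$. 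To salvage your route you would need either to recast the branch construction so that the $\ECT$-instance is computed from the $(\LPO')^*$-input (not from the $\WOP$-solution) and then carry out the concentration argument in full, or to switch to a direct reduction of $\ORT_\Nat[n]$ as in the paper.
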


To complete the picture we furthermore establish the following points.

\begin{itemize}
\item $\WOP(X\mapsto X^\omega)$ and $(\LPO')^*$ are incomparable, and
\item $\ECT$ is strictly weaker than $\WOP(X\mapsto X^\omega)$, and 
\item $\WOP(X\mapsto X^\omega)$ is strictly weaker than $\ECT\times \WOP(X\mapsto X^\omega)$. 
\end{itemize}

In Section \ref{sec:equivalence} we prove the Weihrauch equivalence of the Ordered Ramsey Theorem 
with the parallel product of the Eventually Constant Tail principle and the WOP for $X \mapsto X^\omega$. 
In Section \ref{sec:auxiliary} we complete the picture by proving that the WOP for $X \mapsto X^\omega$ is 
Weihrauch-incomparable with $(\LPO')^*$. This shows that our result is give a different splitting of $\ORT_\Nat$
than the one from Theorem \ref{thm:pauly} above. We further show that $\ECT$ is strictly weaker than 
$\WOP(X\mapsto X^\omega)$ but indispensable in the characterization of $\ORT_\Nat$.

\section{The Ordered Ramsey Theorem and $\WOP(X\mapsto X^\omega)$}\label{sec:equivalence}

In this section we establish the Weihrauch equivalence of the Ordered Ramsey Theorem with the parallel product of 
the Eventually Constant Tail principle and the WOP for the operator $X \mapsto X^\omega$. The lower bound on the Ordered Ramsey Theorem is based on an idea from \cite{Car-Mai-Zda:25}. The upper bound is, as far as we know, the first example of a reduction of a Ramsey-type principle to a WOP principle.

\subsection{Lower bound: $\ORT_{\mathbb N}\geq_W \ECT \times \WOP(X\mapsto X^\omega)$}

Let $X$ be a linearly ordered set. Let $\tau =\langle (b_0,a_0),\dots, (b_{m}, a_{m}) \rangle$ be in $X^\omega$.
Define the {\em length} of $\tau$ as $|\tau|:=2m+2$. For each $0\leq s < |\tau|$, define the $s$-th {\em component} of $\tau$ as
    $$
    \tau(s):=
    \begin{cases}
        b_{t} \in \mathbb N, & \text{if } s=2t \text{ is even};\\
        a_{t} \in X, & \text{if } s=2t+1 \text{ is odd.}
    \end{cases}
    $$
Then $|\tau|$ is the least $s\geq 0$ such that $\tau(s)$ is undefined.\\
We use the above terminology to add an order structure to the coloring devised in \cite{Car-Mai-Zda:25} to establish a reduction from $\RT^2_\infty$ to $\WOP(X\mapsto X^\omega)$. We use $\prec$ to denote the prefix relation on sequences.
 
\begin{proposition}
\label{wop_ort}
    $\WOP(X\mapsto X^\omega) \leq_{\W} \ORT_\mathbb N$.
    \begin{proof}
    Let $\sigma=(\sigma_i)_{i\geq 0}$ be an instance of $\WOP(X\mapsto X^\omega)$. If
    \[\sigma_i=\langle (b_0^i,a_0^i),\dots, (b_{m_i}^i, a_{m_i}^i) \rangle,\]
    then $|\sigma_i|=2m_i+2$ and $m_i\leq b_0^i\leq b_0^0=:k_\sigma$. Hence $|\sigma_i|\leq 2k_\sigma+2$ for every $i\geq 0$.\\   
    Define a coloring $c:[\Nat]^2 \to 2k_\sigma+2$ as follows
  $$
    c(i,j):=
    \begin{cases}
        |\sigma_j|  & \text{if } \sigma_j \prec \sigma_i;\\
        \text{least } s < |\sigma_i| \text{ such that } \sigma_i(s)\neq \sigma_j(s) & otherwise.
 \end{cases}
    $$
If $\sigma_j \nprec \sigma_i$, since $\sigma_j <_{X^\omega} \sigma_i$ there exists $0\leq s<|\sigma_i|$ such that $\sigma_i(s)\neq \sigma_j(s)$. Also, in both cases of the definition of $c$, $c(i,j)<|\sigma_i|$ and $\sigma_j(s)=\sigma_i(s)$ for all $0\leq s<c(i,j)$.\\    
    Notice that for all $0\leq i<j<k$ we have $c(i,k)\leq c(i,j)$. Suppose otherwise, as witnessed by $0\leq i<j<k$ such that $c(i,j)<c(i,k)$. Then $\sigma_k(s)=\sigma_i(s)$ for all $0\leq s\leq c(i,j)$. On the other hand, $\sigma_j(s)=\sigma_i(s)$ for all $0\leq s<c(i,j)$ and $\sigma_j(c(i,j))$ is either undefined or strictly less than $\sigma_i(c(i,j))$, whether in $\mathbb N$ or in $X$, depending on the parity of $c(i,j)$. It follows that $\sigma_k<_{X^\omega }\sigma_j $, which is impossible.\\
    Let $P:=[2k_\sigma+2]$, and define $\leq_P$ as the reverse of the natural order,
    \[ 0>_P 1>_P\dots \:>_P2k_\sigma+1.\]
    Then the above remarks show that $c:[\mathbb N]^2\to P$ and that $c$ is right-ordered with respect to $\leq_P$. Hence $(P,c)$ is a $\sigma$-computable instance of $\ORT_{\mathbb N}$.\\       
    Let $H=\{h_0< h_1< \dots\}$ be a solution to $\ORT_\mathbb N$ for $(P,c)$. Denote by $c^*$ the $c$-color of $[H]^2$. Notice that $c^*=c(h_i,h_{i+1})<|\sigma_{h_i}|$ for all $i\geq 0$, hence $\sigma_{h_i}(c^*)$ is defined for all $i\geq 0$.\\       
    Now, suppose by way of contradiction that $c^*$ is even. Then    
    \[b_{\frac{c^*}{2}}^{h_0}>_\mathbb N\:b_{\frac{c^*}{2}}^{h_1} >_\mathbb N \dots\]
    which is impossible.\\
    Therefore $c^*$ is odd, and 
    \[a_{\frac{c^*-1}{2}}^{h_0}>_X\:a_{\frac{c^*-1}{2}}^{h_1} >_X\dots\]
    is the desired $(H\oplus \sigma)$-computable solution to $\WOP(X\mapsto X^\omega)$ for $\sigma$.
    \end{proof}
\end{proposition}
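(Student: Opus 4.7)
The plan is to turn a descending sequence $\sigma=(\sigma_i)_{i\ge 0}$ in $X^\omega$ into a right-ordered finite coloring of $[\Nat]^2$ whose homogeneous set pinpoints a single coordinate position along which a descending sequence in $X$ can be read off, adapting the construction from \cite{Car-Mai-Zda:25} used there to reduce $\WOP(X\mapsto\omega^X)$ to a Ramsey-type principle. The key preliminary observation is that since $\sigma_0$ is the maximum of the sequence, every later $\sigma_i$ has first $\Nat$-component bounded by $b_0^0=:k_\sigma$, so, viewed componentwise as an alternating list of $\Nat$- and $X$-entries, each $\sigma_i$ has length at most $2k_\sigma+2$. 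Hence a palette $P=\{0,1,\dots,2k_\sigma+1\}$ of constant size (computable from $\sigma_0$) suffices.

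First I would define, for $i<j$, the color $c(i,j)$ to be the least componentwise position where $\sigma_i$ and $\sigma_j$ disagree, with the convention that $c(i,j)=|\sigma_j|$ when $\sigma_j$ is a proper prefix of $\sigma_i$. Existence of such a position in the non-prefix case follows from $\sigma_j<_{X^\omega}\sigma_i$ together with the lexicographic definition of $<_{X^\omega}$. The resulting $c$ is computable from $\sigma$ and takes values in $P$, so it is the candidate $\ORT_\Nat$-instance once we fix the poset structure on $P$.

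Next I would equip $P$ with the reverse of the natural order and verify right-orderedness: for $i<j<k$ I would show $c(i,k)\le c(i,j)$ in $\Nat$, i.e.\ $c(i,j)\le_P c(i,k)$. Otherwise $\sigma_k$ and $\sigma_i$ coincide on positions $0,\dots,c(i,j)$, whereas $\sigma_j$ coincides with $\sigma_i$ only up to position $c(i,j)-1$ and either ends at position $c(i,j)$ (prefix case) or has a strictly smaller entry at position $c(i,j)$; a case-split on the parity of $c(i,j)$ (odd positions correspond to $X$-entries, even positions to $\Nat$-entries), together with the clause for prefixes in the lex order, then yields $\sigma_k>_{X^\omega}\sigma_j$, contradicting descent.

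Finally, given a $c$-homogeneous set $H=\{h_0<h_1<\cdots\}$ with constant color $c^*$, I would observe that $\sigma_{h_{i+1}}$ is never a proper prefix of $\sigma_{h_i}$ along $H$ (otherwise $|\sigma_{h_{i+1}}|=c^*$, yet $c^*=c(h_{i+1},h_{i+2})<|\sigma_{h_{i+1}}|$), so the $c^*$-th components of the $\sigma_{h_i}$ are all defined and strictly decrease in the appropriate type along the lex order. If $c^*=2t$ were even, they would form a descent in $\Nat$, which is impossible; hence $c^*=2t+1$ is odd, and the sequence of odd-positioned entries $a_t^{h_0}>_X a_t^{h_1}>_X\cdots$ is the required $(H\oplus\sigma)$-computable descending sequence in $X$. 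I expect the main obstacle to be the right-orderedness verification, because ``first componentwise difference'' interacts delicately with the alternating component types and with the prefix edge case; once this is cleanly handled, the remaining steps reduce to bookkeeping on lex order.
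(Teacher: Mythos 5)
Your proposal is correct and follows essentially the same route as the paper's proof: the same ``first componentwise disagreement'' coloring with the prefix convention, the same reversed order on $P$ to get right-orderedness, and the same parity argument on the homogeneous color to extract the descending sequence in $X$. Your explicit remark that the prefix case cannot occur along $H$ (since $c^*=c(h_{i+1},h_{i+2})<|\sigma_{h_{i+1}}|$) is exactly the observation the paper uses, just stated more explicitly.
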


\begin{lemma} 
\label{ort_star}
$(\ORT_\mathbb N)^* \equiv_{\sW} \ORT_\mathbb N$.
    \begin{proof}
     Obviously $\ORT_\mathbb N \leq_{\sW} (\ORT_\mathbb N)^*$. We prove the opposite inequality. Let $\big \langle n, (P_1, c_1),\dots, (P_n, c_n)\big \rangle$ be an instance of $(\ORT_\mathbb N)^*$. Then, for all $1\leq i\leq n$, each $c_i: [\mathbb N]^2\to P_i$ is right-ordered with respect to $\leq_{P_i}$.\\
     Let $P:=P_1\times\dots\times P_n$, and define a coloring $c: [\mathbb N]^2\to P$ by
     \[ c(x,y):=\big( c_1(x,y), \dots, c_n(x,y)\big).\]
    We denote by $\leq_P$ the product order on $P$, i.e., for $1\leq i\leq n$
    \[(p_1,\dots, p_n)\leq_P(q_1,\dots, q_n)\;\mbox{ if and only if}\; p_i\leq_{P_i}q_i.\]
    Suppose that $x<y<y'$. Then $c_i(x,y)\leq_{P_i} c_i(x,y')$ for all $1\leq i\leq n$, hence
    \[ c(x,y)=\big(c_1(x,y),\dots, c_n(x,y)\big)\leq_P \big(c_1(x,y'),\dots, c_n(x,y')\big).\]
    This shows that $c$ is right-ordered with respect to $\leq_P$, i.e.~that $(P,c)$ is 
    an instance of $\ORT_\mathbb N$. \\    
    Let $H$ be a solution to $\ORT_\mathbb N$ for $(P,c)$. If $p=(p_1,\dots, p_n)\in P$ is the $c$-color of $[H]^2$ then $p_i\in P_i$ is the $c_i$-color of $[H]^2$ for each $1\leq i\leq n$. Therefore $\langle H,\dots, H\rangle$ is a solution to $(\ORT_\mathbb N)^*$ for $\big \langle n, (P_1,c_1),\dots, (P_n, c_n)\big \rangle$.
    \end{proof}
\end{lemma}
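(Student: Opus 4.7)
\medskip
\noindent\textbf{Proof proposal for Lemma \ref{ort_star}.} The direction $\ORT_\mathbb N \leq_\sW (\ORT_\mathbb N)^*$ is immediate: an instance $(P,c)$ of $\ORT_\mathbb N$ becomes a $(\ORT_\mathbb N)^*$-instance by prepending $n=1$, and any solution to the latter is a solution to the former. The content is in the reverse direction.

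For $(\ORT_\mathbb N)^* \leq_\sW \ORT_\mathbb N$, the natural plan is to merge finitely many right-ordered colourings into a single one by taking a product. Given $\langle n, (P_1,c_1),\dots,(P_n,c_n)\rangle$, I would define $P := P_1 \times \cdots \times P_n$ with the coordinatewise order $\leq_P$, and set $c(x,y) := (c_1(x,y),\dots,c_n(x,y))$. Since each $P_i$ is finite, so is $P$; and since each $c_i$ is right-ordered with respect to $\leq_{P_i}$, the coordinatewise comparison inherited from the product order shows that $c$ is right-ordered with respect to $\leq_P$. Thus $(P,c)$ is a legitimate $\ORT_\mathbb N$-instance, computable from the input.

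For the backward functional, given any infinite $c$-homogeneous set $H$, the tuple $(H,H,\dots,H)$ is simultaneously a solution to each $(P_i,c_i)$: the constant $c$-value on $[H]^2$ projects in the $i$-th coordinate to the constant $c_i$-value on $[H]^2$. Crucially, this functional depends only on $H$ (the $\ORT_\mathbb N$-solution) and the number $n$ of copies needed, which is available to both functionals as part of the input format; this is what makes the reduction \emph{strong} Weihrauch rather than merely Weihrauch.

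I do not expect any real obstacle here: the three facts to verify (finiteness of $P$, preservation of right-orderedness by the product order, and projection of homogeneity) are all routine from the definitions. The only mild subtlety is noticing that the construction genuinely yields $\leq_\sW$ and not just $\leq_\W$, which is why the backward map is arranged to read only $H$.
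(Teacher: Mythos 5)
Your construction is exactly the paper's: form the product poset $P_1\times\dots\times P_n$ with the coordinatewise order, take the product colouring $c(x,y)=(c_1(x,y),\dots,c_n(x,y))$ (right-ordered because each factor is), and reuse the single $c$-homogeneous set $H$ in every coordinate, so the proposal is correct and essentially identical to the paper's proof. One small caveat: in a \emph{strong} Weihrauch reduction the backward functional sees only the solution $H$ and not the original instance, so your remark that $n$ is ``available to both functionals as part of the input format'' is not quite accurate as stated -- but the paper's own proof glosses over how $K$ knows $n$ in exactly the same way, so this does not separate your argument from theirs.
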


\begin{proposition}
$\ECT\times \WOP(X\mapsto X^\omega) \leq_{\W} \ORT_\mathbb N$.
    \begin{proof}
 	From \cite[Lemma 52]{Pau-Pra-Sol:24} it follows that $\ECT \leq_{\sW} \ORT_\mathbb N$.
         From this and Proposition \ref{wop_ort} we have $\ECT\times \WOP(X\mapsto X^\omega) \leq_{\W} (\ORT_\mathbb N)^2$. Clearly $(\ORT_\mathbb N)^2\leq_{\sW} (\ORT_\mathbb N)^*$ and,  by Lemma \ref{ort_star},  $(\ORT_\mathbb N)^*\leq_{\sW} \ORT_\mathbb N$. 
    \end{proof}
\end{proposition}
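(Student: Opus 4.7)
The plan is simply to assemble three reductions that are either already in hand or available in the literature. First, I would cite \cite[Lemma 52]{Pau-Pra-Sol:24} to record that $\ECT \leq_{\sW} \ORT_{\mathbb N}$, and invoke Proposition \ref{wop_ort} for $\WOP(X \mapsto X^\omega) \leq_{\W} \ORT_{\mathbb N}$.

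Next, I would use the fact that Weihrauch reducibility is monotone with respect to the parallel product: if $\PP_0 \leq_{\W} \Q_0$ and $\PP_1 \leq_{\W} \Q_1$, then $\PP_0 \times \PP_1 \leq_{\W} \Q_0 \times \Q_1$, by running the forward and backward functionals componentwise on the two inputs and outputs. Applying this to the two reductions above yields
\[
\ECT \times \WOP(X \mapsto X^\omega) \;\leq_{\W}\; \ORT_{\mathbb N} \times \ORT_{\mathbb N} \;=\; (\ORT_{\mathbb N})^2.
\]

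Finally, by definition of finite parallelization, $(\ORT_{\mathbb N})^2 \leq_{\sW} (\ORT_{\mathbb N})^*$ (taking $n = 2$ and feeding both instances in), and by Lemma \ref{ort_star} we have $(\ORT_{\mathbb N})^* \equiv_{\sW} \ORT_{\mathbb N}$. Chaining these reductions closes the argument. No new combinatorial construction is required at this point, since the hard work of coding a pair of $\ORT_{\mathbb N}$-instances into a single one via the product poset has already been carried out in the proof of Lemma \ref{ort_star}; the only genuinely novel lower-bound input is Proposition \ref{wop_ort}, and everything else is bookkeeping in the Weihrauch lattice.
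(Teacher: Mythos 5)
Your proposal is correct and follows essentially the same route as the paper: cite \cite[Lemma 52]{Pau-Pra-Sol:24} for $\ECT \leq_{\sW} \ORT_\mathbb N$, combine with Proposition \ref{wop_ort} via monotonicity of the parallel product to get $\ECT\times \WOP(X\mapsto X^\omega) \leq_\W (\ORT_\mathbb N)^2$, then chain through $(\ORT_\mathbb N)^* \leq_{\sW} \ORT_\mathbb N$ from Lemma \ref{ort_star}. The only difference is that you spell out the componentwise product-monotonicity step, which the paper leaves implicit.
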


\subsection{Upper bound: $\ORT_{\mathbb N}\leq_\W \ECT \times \WOP(X\mapsto X^\omega)$}

In this subsection we prove that $\ORT_{\mathbb N}\leq_\W \ECT \times \WOP(X\mapsto X^\omega)$. We obtain 
the result from a fine-grained parametrized version. Let $n\geq 1$. Let $\ORT_\mathbb N[n]$ denote the restriction of $\ORT_{\mathbb N}$ to instances $(P, c)$ with $|P|=n$. Let $\WOP(X\mapsto X_{lex}^n)$ denote the well-ordering principle regarding the lexicographic order $<_{lex}$ on $X^n$: an instance of $WOP(X\mapsto X_{lex}^n)$ is a $<_{lex}$-descending sequence $\sigma=(\sigma_i)_{i\geq 0}$ in $X^n$ for some linearly ordered set $X$, and a corresponding solution is a descending sequence in $X$ whose terms occur as components of (terms of) $\sigma$.

\begin{proposition}
\label{ort[n]} For all $n\geq 1$, 
    $\ORT_\mathbb N[n] \leq_{\sW} (\ECT)^n\times \WOP(X\mapsto X_{lex}^n)$.
    \begin{proof}
        Let $(P,c)$ be an instance of $\ORT_\mathbb N[n]$. This means that $P$ is a finite poset of cardinality $n$, and $c:[\mathbb N]^2\to P$ is right-ordered with respect to $\leq_P$.\\
        We begin by fixing a linear ordering $p_1<p_2<\dots <p_n$ of $P$ extending $\leq_P$. This can be done computably in $P$.\\
        For each $1\leq j\leq n$ we define a binary sequence $improvement_j$ and an auxiliary sequence $previous_j$ as follows. First, set       
        \[improvement_j(0):=1\;,\;\;previous_j(0)=previous_j(1):=0.\]
        Now let $i\geq 1$. If there exists $previous_j(i)\leq k<i$ such that $c(k,i)=p_j$, then call $k_j^i$ the least such $k$ and set $improvement_j(i):=1$, $previous_j(i+1):=i$. Otherwise, set  $improvement_j(i):=0$, $previous_j(i+1)=previous_j(i)$. We say that $p_j$ improves at stage $i\geq 0$ if and only if $improvement_j(i)=1$. Notice that both $improvement_j$ and $previous_j$ are $(P\oplus c)$-computable.\\
        When interpreted as $2$-colorings of $\mathbb N$, the sequences $improvement_j$, $1\leq j\leq n$, constitute an instance of $(\ECT)^n$. Let $n_0^{j}$, $1\leq j\leq n$, be the corresponding solutions, and set $n_0:=\max_{1\leq j\leq n}n_0^j$. Then for all $1\leq j\leq n$ either $p_j$ doesn't improve after stage $n_0$ or $p_j$ improves infintely many times after stage $n_0$.\\
        Next, we define the fitting $(P\oplus c)$-computable instance $\sigma=(\sigma_i)_{i\geq0}$ of $\WOP(X\mapsto X_{lex}^n)$.
        We choose $X:= \{1, \dots, n\}\times \mathbb N \times \mathbb Q \times \{-n,\dots,-1\}$. Each factor is equipped with its natural order, so that the resulting lexicographic order on $X$ is linear.\\    
        Stage $0$. Set $\sigma_0:=\big(a_n^0,\dots, a_{1}^0\big)$, where $a_j^0:=\big(j,\:1,\:2,-1\big)$ for all $n\geq j\geq 1$.\\      
        Stage $i\geq 1$. Consider the set $I^i$ of all $j$-s such that $p_j$ improves at stage $i$. For every $j\in I^i$, let $n\geq m^i_j\geq j$ be the largest $m$ such that $p_m$ has improved at least once in the timeframe $(previous_j(i), i]$.\\
        Suppose that $j\in I^i$. If $m^i_j=j$, set
        \[a_j^i:=\big(\:j,\: a_j^{i-1}(2),\: \frac{1}{k_j^i+1},\: -1\:);\]
        if, otherwise,  $m_j^i> j$, set
        \[a_j^i:=\big( \:j,\: a_j^{i-1}(2)+1,\: \frac{1}{k_j^i+1},\:-1 \big).\]
        Suppose instead that $j\notin I^i$. If $j=\max \{m_l^i\;|\;l\in I^i\}$ then set
        \[a_j^i:=\big(j,\: a_j^{i-1}(2),\:a_j^{i-1}(3),\: a_j^{i-1}(4)-1\big);\]
        otherwise, set
        \[a_j^i:=a_j^{i-1}.\]
        Finally, set $\sigma_i:=\big(a_n^i,\dots, a_{1}^i\big)$.\\
        Notice that $I^i$ is non-empty for every $i\geq 1$, as it contains the index of $c(i-1,i)$. Then it's easy to see that $\sigma_{i}<_{lex} \sigma_{i-1}$, as required. This completes the definition of $\sigma$.\\
        Let $\overrightarrow x= (x_s)_{s\geq 0}$ be a sequence in $X$ that is a solution to $\WOP(X\mapsto X_{lex}^n)$ for $\sigma$. We aim to compute an increasing enumeration $\overrightarrow y=(y_s)_{s\geq 0}$ of natural numbers for a $c$-homogenous set from $n_0$ and $\overrightarrow x$. The $x_s$-s are drawn from the matrix 
        \[\Sigma:=(a_j^i)_{n\geq j\geq 1}^{i\geq 0},\]
        where $i$ is interpreted as a row index and $j$ as a (decreasing) column index. We are able to locate the $x_s$-s more precisely by reasoning as follows. Let $j^*$ be the largest $n\geq j\geq 1$ such that $p_{j}$ improves infinitely many times.  Let $i^*$ be the least $i\geq 0$ such that $p_{j*}$ improves at stage $i$ and no color $p_j$, $n\geq j>j^*$, improves after stage $i$. Then:

        \begin{enumerate}

            \item \label{uno}
            If $p_j$ improves finitely many times, then 
            $a_{j}^i(1)$, $a_{j}^i(2)$, and $a_{j}^i(3)$ are constant for $i\geq n_0$; also, by construction $a_{j}^i(4)>_Xa_{j}^{i+1}(4)$ for at most $(n-1)$ many  $i\geq n_0$. Hence column $j$ contains at most $(n_0+1)+(n-1)=n_0+n$ distinct coefficients.           

            \item \label{due}
            Together, columns $n\geq j \geq j^*+1$ contain at most $M:= n(n_0 +n)$ distinct coefficients. This follows immediately from remark (\ref{uno}) and the choice of $j^*$.
            
            \item \label{tre}
            $\overrightarrow x$ can't move from a given column to a column on its left. This is true because by construction $a_j^i>_Xa_{j'}^{i'}$ for all $n\geq j>j'\geq 1$ and  for all $i,i'\geq 0$.
            
            \item \label{quattro}
            If $j^*-1\geq j\geq 1$, then for every $i\geq 0$ column $j$ contains finitely many distinct coefficients smaller than $a_j^i$. This follows trivially from (\ref{uno}) if $p_j$ doesn't improve indefinitely. Suppose otherwise: since $p_{j^*}$ improves at some stage $i'>i$ and $j^*>j$, $a_j^i(2)$ is eventually increased.

            \item \label{cinque}
            $\overrightarrow x$ can't visit columns $j^*-1\geq j\geq 1$. This follows from remarks (\ref{tre}) and (\ref{quattro}) combined.

            \item \label{sei}
            $x_{s}$ lies in column $j^*$ for all $s\geq M+1$.
            This follows from remarks (\ref{due}) and (\ref{cinque}) combined.

            \item \label{sette}
            Let $z:=a_{j^*}^{i^*}(2)$. Then $a_{j^*}^{i}(2)< z$ for all $0\leq i<i^*$ and $a_{j^*}^{i}(2)=z$ for all $i\geq i^*$. Hence if $i<i^*$ then $\sigma$ contains finitely many distinct coefficients smaller than $a_{j^*}^{i}$, meaning that $\overrightarrow x$ can't visit the square $a_{j^*}^{i}$.

            \item \label {otto}
            For $s\geq M+1$, the $x_s$-s are drawn in increasing order of rows from column $j^*$ (so that $x_{s+n}(3)<x_{s}(3)<2$ for all $s\geq M+2$.). This follows from remarks (\ref{sei}) and (\ref{sette}), since $x_s(2)=z$ is fixed for $s\geq M+1$. 
            
        \end{enumerate}
        Set
        \[y_s:=\frac{1}{x_{f(s)}(3)}-1\;\;\;\;\forall s\geq 0,\] 
        where $f(s):=(M+2)+(s\times n)$. Since $f$ is computable from $n_0$, $\overrightarrow y$ is computable from $(n_0\oplus \overrightarrow x)$. Remark (\ref{otto}) above proves that $\overrightarrow y$ is an increasing sequence of natural numbers.\\
        It only remains to show that $\overrightarrow y$ is $c$-monochromatic. We fix $0\leq s< t$, and show that $c(y_s,y_t)=p_{j^*}$. Let $i$ be least such that $a_{j^*}^{i}(3)=x_{f(s)}(3)$, so that $y_s=k_{j^*}^{i}$ and $c(k_{j^*}^{i},i)=p_{j^*}$. Similarly, let $i'$ be least such that $a_{j^*}^{i'}(3)=x_{f(t)}(3)$, so that $y_t=k_{j^*}^{i'}$. By construction we have 
        $i^*\leq k_{j^*}^{i}<i\leq k_{j^*}^{i'}$. Since $c$ is right-ordered, $c(k_{j^*}^{i},k_{j^*}^{i'})\geq_P p_{j^*}$. Now if $c(k_{j^*}^{i},k_{j^*}^{i'})=p_j>_P p_{j^*}$ then $j>j^*$ and $p_j$ improves after $i^*$, contradicting the choices of $i^*$ and $j^*$. Therefore $c(y_s,y_t)= c(k_{j^*}^{i},k_{j^*}^{i'})= p_{j^*}$, as required.        
    \end{proof}
\end{proposition}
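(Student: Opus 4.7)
The plan is to reduce an instance $(P,c)$ of $\ORT_\mathbb N[n]$ by splitting the task into two sub-tasks: detecting which colours occur cofinally, handled by $n$ parallel $\ECT$ queries, and extracting an explicit homogeneous witness, handled by one $\WOP(X\mapsto X_{lex}^n)$ query. As a preliminary I would fix a linear extension $p_1<\dots<p_n$ of $\leq_P$, computable from $P$, so that we can speak of column $j$ meaning colour $p_j$.

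For the $\ECT$ side I would, for each colour $p_j$, compute a binary sequence $f_j$ whose $1$s mark stages $i$ at which there is a fresh witness $k<i$ with $c(k,i)=p_j$ (picking the least such $k$, call it $k_j^i$, and advancing a pointer). Right-orderedness ensures this notion of ``improvement'' genuinely tracks the colour's asymptotic presence in pairs. Feeding $(f_1,\dots,f_n)$ to $(\ECT)^n$ yields a stage $n_0$ past which each $p_j$ either never improves or improves infinitely often.

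For the WOP side, I would build a single $<_{lex}$-descending sequence $\sigma=(\sigma_i)$ in $X^n$, where $X$ is a linearly ordered product designed so that at each stage exactly one column is forcing the descent. The coordinates of $X$ would record, in decreasing priority: a column index $j$, a counter bumped whenever $p_j$ improves in the shadow of a larger colour that has itself improved recently, a strictly decreasing function of the witness $k_j^i$ such as $1/(k_j^i+1)$, and a last-resort negative tick to enforce strict descent at stages where no other coordinate naturally moves. Let $j^*$ be the largest $j$ that improves infinitely often. The key design goal is that any $X$-descending solution $\overrightarrow{x}$ be trapped, past a computable stage, inside column $j^*$: the column coordinate forbids moving to columns with larger $j$, the $\ECT$ bound $n_0$ confines columns with $j>j^*$ to finitely many distinct values in total, and the second coordinate eventually exceeds any given value in columns with $j<j^*$ thanks to the recurring improvements of $p_{j^*}$. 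Once confined to column $j^*$, descent must manifest in the witness coordinate, yielding an increasing enumeration of indices $k_{j^*}^{i_s}$; right-orderedness together with the maximality of $j^*$ then forces this enumeration to be $c$-homogeneous of colour $p_{j^*}$.

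The main obstacle is the simultaneous bookkeeping of the $X$-coordinates. Ensuring pointwise strict lex descent at \emph{every} stage $i$ requires a case split on whether the maximum improving colour of stage $i$ is itself improving or merely being passively incremented, and this is what forces the fourth, last-resort coordinate. At the same time, the asymptotic trapping of $\overrightarrow{x}$ inside column $j^*$ requires that the second coordinate grow unboundedly in the very columns $j<j^*$ that must be avoided. Threading these two mechanisms together without interference, and pinning down the precise stage from which $\overrightarrow{x}$ can be trusted to live in column $j^*$, is the delicate part; once the construction is fixed, the remaining verification splits cleanly into checking that $\sigma$ is a valid $\WOP$-instance and then a handful of structural observations that read the homogeneous set off column $j^*$.
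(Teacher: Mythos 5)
Your plan coincides with the paper's proof in all essentials: the same $\ECT$ instance (improvement sequences with a pointer recording the least fresh witness $k_j^i$), the same four-coordinate order $X$ (column index, counter bumped under a recently-improved larger colour, $1/(k_j^i+1)$, and a last-resort decreasing tick), the same trapping of the descending solution in the column of the largest cofinally improving colour $j^*$, and the same use of right-orderedness plus maximality of $j^*$ to conclude homogeneity. The delicate bookkeeping you flag (strict lex descent at every stage and the computable stage past which the solution lives in column $j^*$) is exactly what the paper's case split on $m_j^i$ and its explicit bound $M=n(n_0+n)$ resolve, so your outline is a faithful, correct compression of the same argument.
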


Let $\WOP(X\mapsto X^\omega)[\leq n]$ denote the restriction of $\WOP(X\mapsto X^\omega)$ to those sequences $\tau=(\tau_i)_{i\geq 0}$ satisfying $|\tau_i|\leq n$ for all $i\geq 0$.

\begin{lemma} \label{wop_lex}
For all $n\geq 1$, $\WOP(X\mapsto X_{lex}^n)\leq_{\sW} \WOP(X\mapsto X^\omega)[\leq 2n]$
\begin{proof}
    Let $\sigma =(\sigma_i)_{i\geq 0}$ be an instance of $\WOP(X\mapsto X_{lex}^n)$, say $\sigma_i=\big(a_n^i,\dots, a_{1}^i\big)$.
    Set $\tau_i:=\langle(n, a_n^i), (n-1, a_{n-1}^i),\dots,(1, a_{1}^i)\rangle$ for all $i\geq 0$. Then $\tau_{i+1}<_{X^\omega}\tau_{i}$ and $|\tau_i|=2n$. Hence $\tau$ is a $\sigma$-computable instance of $ \WOP(X\mapsto X^\omega)[\leq 2n]$. It's clear that every solution to $ \WOP(X\mapsto X^\omega)[\leq 2n]$ for $\tau$ is also a solution to $\WOP(X\mapsto X_{lex}^n)$ for $\sigma$.
\end{proof}
\end{lemma}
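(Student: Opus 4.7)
My plan is to establish this reduction by a direct encoding of $n$-tuples as $X^\omega$-terms, using a fixed strictly decreasing sequence of natural-number indices. Concretely, I would map each tuple $(a_n,\ldots,a_1)\in X^n$ to the term $\langle (n,a_n),(n-1,a_{n-1}),\ldots,(1,a_1)\rangle\in X^\omega$. The first coordinates $n>n-1>\cdots>1$ are strictly decreasing, as required by the definition of $X^\omega$, and the resulting term has length exactly $2n$, so it fits the restriction $[\leq 2n]$.

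Given an instance $\sigma=(\sigma_i)_{i\geq 0}$ of $\WOP(X\mapsto X_{lex}^n)$ with $\sigma_i=(a_n^i,\ldots,a_1^i)$, the forward functional $H$ simply applies the encoding pointwise to produce $\tau=(\tau_i)_{i\geq 0}$. The main verification on the forward side is that $\tau_{i+1}<_{X^\omega}\tau_i$ whenever $\sigma_{i+1}<_{lex}\sigma_i$. Since corresponding $\mathbb N$-coordinates in $\tau_i$ and $\tau_{i+1}$ are identical, the first position at which the pair differs is exactly the odd position corresponding to the first coordinate $j$ where $\sigma_i$ and $\sigma_{i+1}$ differ; at that position the $b$-components coincide and $a_j^{i+1}<_X a_j^i$, which by the lex-rule in the definition of $<_{X^\omega}$ delivers $\tau_{i+1}<_{X^\omega}\tau_i$.

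For the back functional $K$, I would argue that the identity on solutions already works, giving \emph{strong} Weihrauch reducibility. Indeed, the sub-terms appearing in $\tau_i$ are exactly the $a_j^i$ for $j\in\{1,\ldots,n\}$, so any infinite $<_X$-descending sequence ``contained in'' $\tau$ in the sense of the paper is precisely an infinite $<_X$-descending sequence whose terms occur as components of the $\sigma_i$'s in the required relative order. By definition this is a solution to $\WOP(X\mapsto X_{lex}^n)$ on $\sigma$.

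There is no real obstacle to overcome: the argument is essentially definitional. The only point requiring a little care is the dual role of the fixed sequence $n,n-1,\ldots,1$ of $b$-values, which must simultaneously validate membership in $X^\omega$ (strict decrease of $b$'s) and reduce the $X^\omega$-comparison to the componentwise lex comparison in $X^n$ (equality of $b$'s across different $\tau_i$'s, so that comparison is decided by the first differing $a$-component). Once that observation is made, both directions are immediate.
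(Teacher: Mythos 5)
Your proposal is correct and follows essentially the same route as the paper: the same pointwise encoding $(a_n^i,\ldots,a_1^i)\mapsto\langle(n,a_n^i),\ldots,(1,a_1^i)\rangle$, the same observation that fixed, strictly decreasing $b$-components make the $X^\omega$-comparison coincide with the lexicographic one, and the same identity back functional yielding a strong reduction. No differences worth noting.
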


\begin{corollary}
    $\ORT_{\mathbb N}\leq_\W \ECT \times \WOP(X\mapsto X^\omega)$
\begin{proof}
    By proposition \ref{ort[n]} and lemma \ref{wop_lex} we have $\ORT_{\mathbb N}[n]\leq_{\sW} (\ECT)^n \times \WOP(X\mapsto X^\omega)[\leq 2n]$. Hence
    $\ORT_{\mathbb N}\leq_\W (\ECT)^* \times \WOP(X\mapsto X^\omega)$ (the argument in proposition \ref{due} doesn't establish a strong Weihrauch reduction, since $f$ depends on $n=|P|$). 
From Lemma \ref{lem:ect_tc} it follows that $(\ECT)^*\equiv_\W \ECT$, since $(\PP^*)^* \equiv_\W \PP^*$ for every problem $\PP$. Therefore the desired inequality $\ORT_{\mathbb N}\leq_\W \ECT \times \WOP(X\mapsto X^\omega)$ holds.
\end{proof}
\end{corollary}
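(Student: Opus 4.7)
The plan is to bootstrap from the restricted-palette version $\ORT_\mathbb N[n]$ handled by Proposition \ref{ort[n]} to the full $\ORT_\mathbb N$. Given an instance $(P,c)$ of $\ORT_\mathbb N$, the first move is to read off $n := |P|$ from the input; since $n$ is explicitly available to the forward functional, a Weihrauch reduction may branch on it and dispatch to the appropriate component of a parallelized problem. Combining Proposition \ref{ort[n]} with Lemma \ref{wop_lex} already yields $\ORT_\mathbb N[n] \leq_\sW (\ECT)^n \times \WOP(X\mapsto X^\omega)[\leq 2n]$ uniformly in $n$, and the bounded-length WOP trivially reduces to $\WOP(X\mapsto X^\omega)$, so what remains is to glue these $n$-indexed reductions into a single reduction from the unrestricted $\ORT_\mathbb N$.

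For the WOP factor no further work is needed, since the same single instance of $\WOP(X\mapsto X^\omega)$ accommodates every possible $n$. For the $\ECT$ factor, the $n$-fold product $(\ECT)^n$ must be absorbed into a single $\ECT$ instance. The natural first step is to note that, because $n$ is part of the input, the family of reductions witnesses $\ORT_\mathbb N \leq_\W (\ECT)^* \times \WOP(X\mapsto X^\omega)$. The proof is then completed by invoking Lemma \ref{lem:ect_tc}, namely $\ECT \equiv_\W (\TC_\Nat)^*$, which exhibits $\ECT$ as a finite parallelization and hence idempotent under $(\cdot)^*$: one has $(\ECT)^* \equiv_\W ((\TC_\Nat)^*)^* \equiv_\W (\TC_\Nat)^* \equiv_\W \ECT$.

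The main conceptual obstacle, which already motivated the parameterized formulation, is that the backward computation in Proposition \ref{ort[n]} relies on a threshold of the form $M = n(n_0+n)$ depending on $n = |P|$, so the reduction is only strong Weihrauch for fixed $n$; passing to $\leq_\W$ (rather than $\leq_\sW$) at the level of the full $\ORT_\mathbb N$ is essential, because it allows the backward functional to consult the original instance to recover $n$ before decoding the solution. Beyond this, the argument is routine bookkeeping, with the cleanest presentation being to factor through $(\ECT)^*$ and then collapse the finite parallelization via Lemma \ref{lem:ect_tc}.
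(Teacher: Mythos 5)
Your proposal is correct and follows essentially the same route as the paper: combine Proposition \ref{ort[n]} with Lemma \ref{wop_lex} to get $\ORT_{\mathbb N}\leq_\W (\ECT)^* \times \WOP(X\mapsto X^\omega)$, then collapse $(\ECT)^*$ to $\ECT$ via Lemma \ref{lem:ect_tc} and the idempotence of finite parallelization. Your remark on why the reduction is only $\leq_\W$ (the backward decoding threshold depends on $n=|P|$) matches the paper's own parenthetical observation.
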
\textbf{}

\section{Completing the picture}\label{sec:auxiliary}

In the previous section we proved the equivalence $\ORT_\Nat \equiv_\W \ECT \times \WOP(X \mapsto X^\omega)$. Since $\ORT_\Nat$ is also equivalent to $\ECT \times (\LPO')^*$ by Theorem \ref{thm:pauly} from \cite{Pau-Pra-Sol:24}, it is natural to ask how $\WOP(X \mapsto X^\omega)$ compares with $(\LPO')^*$ and with $\ECT$. 
We show that $\WOP(X\mapsto X^\omega)$ and $(\LPO')^*$ are incomparable and that $\ECT$ is strictly 
weaker than $\WOP(X\mapsto X^\omega)$. 

\subsection{$\WOP(X\mapsto X^\omega)$  and $(\LPO')^*$ are Weihrauch incomparable}

The proof of the following proposition is a variation on the proof of \cite[Lemma 8]{Pau-Pra-Sol:24} which shows $\IsFinite_\mathbb S \nleq_\W \ECT$.

\begin{proposition}
\label{isfinite_wop}
    $\IsFinite_\mathbb S \nleq_\W \WOP(X\mapsto X^\omega)$.
\begin{proof}    Suppose by way of contradiction that $\IsFinite_\mathbb S \leq_\W \WOP(X\mapsto X^\omega)$, as witnessed by the computable Turing functionals $K$ and $H$. This means that for every instance $p$ of $\IsFinite_\mathbb S$, $H(p)$ is an instance of $\WOP(X\mapsto X^\omega)$, and for every solution $\overrightarrow  x$ to $\WOP(X\mapsto X^\omega)$ for $H(p)$, $K \langle\overrightarrow  x, p\rangle=\IsFinite_\mathbb S(p)$.\\
    Consider $s^0:=0^\mathbb N$ and any solution $\langle y_0^0, y_1^0,\dots\rangle$ to $\WOP(X\mapsto X^\omega)$ for $H(s^0)$. Since $\IsFinite_\mathbb S(s^0)=\top$,
    \[K\langle\langle y_0^0,\dots, y_{m_0}^0\rangle, 0^{n_0}\rangle \downarrow \;= \top\]
    for some $m_0,n_0\geq 0$. By choosing a large enough $n_0$, we can make sure that all the terms $y_0^0,\dots,y_{m_0}^0$ are mentioned in $H(0^{n_0})$.\\
    \comment{Next, consider $s^1:=0^{n_0}10^\mathbb N$ and any solution $\langle y_0^1, y_1^1, \dots\rangle$ to $\WOP(X\mapsto X^\omega)$ for $H(s^1)$. Since $\IsFinite(s^1)=\top$, 
    \[K\langle\langle y_0^1, \dots, y_{m_1}^1\rangle, 0^{n_0}10^{n_1}\rangle = \top\]
    for some $m_1, n_1\geq 0$. Again, we can assume that $H(0^{n_0}10^{n_1})$ mentions the terms $y_0^1,\dots,y_{m_1}^1$.\\}
    We proceed inductively to obtain, for each $k\geq 1$: a binary sequence $s^k:=0^{n_0}10^{n_1} \dots 10^{n_{k-1}}10^\mathbb N$; a solution $\langle y_0^k, y_1^k,\dots\:\rangle$ to $\WOP(X\mapsto X^\omega)$ for $H(s^k)$; two integers $m_k, n_k\geq 0$ such that
    \[K\langle\langle y_0^k, \dots, y_{m_k}^k\rangle, 0^{n_0}10 \dots 10^{n_{k-1}}10^{n_k}\rangle \downarrow \; = \top\]
    and such that $y_0^k,\dots,y_{m_k}^k$ are all mentioned in $H(0^{n_0}10 \dots 10^{n_{k-1}}10^{n_k})$.\\
    Now, let
    \[p:=0^{n_0}10^{n_{1}}10^{n_2}10^{n_3}1\dots\:,\]
    and let $\langle x_0, x_1,\dots \rangle$ be a solution to $\WOP(X\mapsto X^\omega)$ for $H(p)$. Since $\IsFinite_\mathbb S(p)=\bot$, 
    \[K\langle\langle x_0, \dots, x_{m}\rangle, 0^{n_0}1 \dots 10^{n_{k}}\rangle \downarrow \; = \bot\]
    for some $m, k\geq 0$, and again we can assume without loss of generality that $x_0,\dots, x_m$ are all mentioned in $H(0^{n_0}1 \dots 10^{n_{k}})$. The latter, which is an initial segment of both $H(s^k)$ and $H(p)$, also contains the terms $ y_0^k, \dots, y_{m_k}^k$, and thus decides on the relation between $x_m$ and $y_{m_k}^k$ in the linear ordering $\leq_X$.\\
    If $x_m\geq_X y_{m_k}^k$, then $\overrightarrow z:=\langle x_0,\dots, x_m, y_{m_{k+1}}^k, y_{m_{k+2}}^k,\: \dots \rangle$ is a solution to $\WOP(X\mapsto X^\omega)$ for $H(s^k)$. Hence 
    \[K\langle \overrightarrow{z} , s^k\rangle = \top. \]
    But 
    \[K\langle \overrightarrow{z} , s^k\rangle= K\langle \langle x_0, \dots, x_m\rangle  , 0^{n_0}1 \dots 10^{n_{k}}\rangle=\bot,\]
    which is impossible.\\
    Similarly, if $x_m\leq_X  y_{m_k}^k$ then $\overrightarrow w:=\langle y_0^k,\dots, y_{m_k}^k, x_{m+1}, x_{m+2},\: \dots\rangle$ is a solution to $\WOP(X\mapsto X^\omega)$ for $H(p)$, hence 
    \[K\langle \overrightarrow{w} , p\rangle = \bot. \]
    But 
    \[K\langle \overrightarrow{w} , p\rangle= K\langle \langle y_0^k, \dots, y_{m_k}^k\rangle  , 0^{n_0}1 \dots 10^{n_{k}}\rangle=\top,\]
    which is again impossible. This gives the desired contradiction.\\
    \end{proof}
\end{proposition}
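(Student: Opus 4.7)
The plan is to argue by contradiction, following the standard deception template exemplified by \cite[Lemma 8]{Pau-Pra-Sol:24}, where the analogous non-reduction $\IsFinite_\mathbb S \nleq_\W \ECT$ is established. Suppose $\IsFinite_\mathbb S \leq_\W \WOP(X\mapsto X^\omega)$ is witnessed by computable Turing functionals $H$ and $K$. I will build a binary sequence $p$ containing infinitely many $1$'s (hence $\IsFinite_\mathbb S(p) = \bot$) together with a $\WOP$-solution for $H(p)$ on which $K$ is forced, by its commitments on the ``finitely many ones'' approximations of $p$, to output the wrong answer.

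The construction is inductive. At stage $k$, a finite initial segment $0^{n_0}10^{n_1}\cdots 10^{n_{k-1}}1$ of $p$ has been fixed; consider the zero-completion $s^k := 0^{n_0}10^{n_1}\cdots 10^{n_{k-1}}10^\mathbb N$, for which $\IsFinite_\mathbb S(s^k) = \top$. Pick any $\WOP$-solution $\overrightarrow y^k = \langle y_0^k, y_1^k,\dots\rangle$ of $H(s^k)$; since $K\langle \overrightarrow y^k, s^k\rangle = \top$ is produced by $K$ reading only finite prefixes of its arguments, there exist $m_k, n_k$ with $K\langle\langle y_0^k,\dots,y_{m_k}^k\rangle, 0^{n_0}1\cdots10^{n_k}\rangle \downarrow = \top$; by enlarging $n_k$ if necessary I also arrange that $y_0^k,\dots,y_{m_k}^k$ all appear as sub-terms of elements of $H$ applied to this prefix. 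This fixes the next block of $p$, and taking the union at the end yields $p := 0^{n_0}10^{n_1}10^{n_2}\cdots$. For any $\WOP$-solution $\overrightarrow x = \langle x_0, x_1,\dots\rangle$ of $H(p)$, since $\IsFinite_\mathbb S(p) = \bot$ there exist $m$ and a $k$ large enough that $K\langle\langle x_0,\dots,x_m\rangle, 0^{n_0}1\cdots10^{n_k}\rangle \downarrow = \bot$ and $x_0,\dots,x_m$ are sub-terms of elements of $H$ applied to the same finite prefix.

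The contradiction comes from splicing. The common prefix $0^{n_0}1\cdots10^{n_k}$ of $p$ and $s^k$ determines a common initial segment of $H(p)$ and $H(s^k)$ that contains both $\{x_i\}_{i\leq m}$ and $\{y_i^k\}_{i\leq m_k}$ as sub-terms; in particular $x_m$ and $y_{m_k}^k$ live in the same linear order $X$ and are $\leq_X$-comparable. If $x_m \geq_X y_{m_k}^k$, then $\overrightarrow z := \langle x_0,\dots,x_m, y_{m_k+1}^k, y_{m_k+2}^k,\dots\rangle$ is strictly $<_X$-descending (via $x_m \geq_X y_{m_k}^k >_X y_{m_k+1}^k$) and is a $\WOP$-solution of $H(s^k)$; but $K\langle \overrightarrow z, s^k\rangle$ extends the committed computation $K\langle\langle x_0,\dots,x_m\rangle, 0^{n_0}1\cdots10^{n_k}\rangle = \bot$, contradicting $\IsFinite_\mathbb S(s^k) = \top$. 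Symmetrically, if $x_m <_X y_{m_k}^k$, the splice $\overrightarrow w := \langle y_0^k,\dots,y_{m_k}^k, x_{m+1}, x_{m+2},\dots\rangle$ is a $\WOP$-solution of $H(p)$ on which $K$ commits to $\top$, contradicting $\IsFinite_\mathbb S(p) = \bot$.

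The main technical obstacle is verifying that $\overrightarrow z$ and $\overrightarrow w$ genuinely qualify as $\WOP$-solutions: strict descent follows immediately from the case split, but the ``contained in the instance'' condition -- sub-terms at non-decreasing position indices -- requires the head and tail portions to fit coherently into a single instance. This is precisely why the construction insists on placing $y_0^k,\dots,y_{m_k}^k$ and (after enlargement) $x_0,\dots,x_m$ within sub-terms of the common $H$-prefix at stage $k$: the head portion of each splice can then be indexed in the common initial region of the instance, while the tail inherits the non-decreasing indexing from the original $\WOP$-solution beyond that region.
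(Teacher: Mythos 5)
Your proposal is correct and follows essentially the same diagonalization-and-splicing argument as the paper: the same inductive construction of the approximations $s^k$ and of $p$, the same use of $K$'s commitments on the common prefix $0^{n_0}1\cdots 10^{n_k}$, and the same two-case splice on the comparison of $x_m$ with $y_{m_k}^k$. Your closing remark on the containment (non-decreasing index) condition is in fact slightly more attentive than the paper, which does not discuss it; the only residual imprecision (tail indices need not lie beyond the common region, but at most finitely many offending tail terms can be discarded) is harmless and shared with the paper's own presentation.
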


\begin{corollary}
\label{lpo'_wop}
    $(\LPO')^*\nleq _\W \WOP(X\mapsto X^\omega)$. 
    \begin{proof}
         Combine Proposition \ref{isfinite_wop} with the fact that $\IsFinite_\mathbb S\leq_\W \LPO'$ (as established in the proof of \cite{Pau-Pra-Sol:24}, Lemma 9).
    \end{proof}
\end{corollary}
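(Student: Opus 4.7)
The plan is to derive the corollary by a short chain of reductions, using Proposition~\ref{isfinite_wop} as the hard input. First I would recall that Pauly, Pradic and Soldà establish $\IsFinite_\mathbb S \leq_\W \LPO'$ in the proof of their Lemma~9, so since $\LPO' \leq_\W (\LPO')^*$ (a single-instance invocation of the parallelization suffices), composing these two reductions yields $\IsFinite_\mathbb S \leq_\W (\LPO')^*$.

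Next I would assume for contradiction that $(\LPO')^* \leq_\W \WOP(X\mapsto X^\omega)$. Weihrauch reducibility is transitive (the forward and backward functionals compose straightforwardly), so chaining this with the previous step gives $\IsFinite_\mathbb S \leq_\W \WOP(X\mapsto X^\omega)$, which directly contradicts Proposition~\ref{isfinite_wop}. Hence $(\LPO')^* \nleq_\W \WOP(X\mapsto X^\omega)$.

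There is no genuine obstacle here: the entire content of the corollary is packaged in Proposition~\ref{isfinite_wop}, and what remains is the observation that $\IsFinite_\mathbb S$ reduces to a single copy of $\LPO'$ (hence to its finite parallelization) together with transitivity of $\leq_\W$. Accordingly the proof can be stated in just a couple of lines, exactly as the author already does.
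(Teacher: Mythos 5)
Your proposal is correct and is essentially the paper's own argument: the paper's one-line proof "combine Proposition~\ref{isfinite_wop} with $\IsFinite_\mathbb S\leq_\W \LPO'$" implicitly uses exactly the chain $\IsFinite_\mathbb S\leq_\W \LPO'\leq_\W(\LPO')^*$ together with transitivity of $\leq_\W$, which you merely spell out. Nothing is missing and nothing differs in substance.
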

    
\begin{corollary}
   $\WOP(X \mapsto X^\omega) \lneq_\W \ECT\times \WOP(X \mapsto X^\omega)$.
   \begin{proof}
       We already know that $\ECT\times \WOP(X \mapsto X^\omega)\equiv_\W \ORT_\mathbb N\equiv_\W \ECT\times (\LPO')^*$. Then the assumption $\ECT\times \WOP(X \mapsto X^\omega)\leq_\W \WOP(X \mapsto X^\omega)$ implies $(\LPO')^*\leq _\W \WOP(X\mapsto X^\omega)$,  which contradicts corollary \ref{lpo'_wop}.
   \end{proof}
\end{corollary}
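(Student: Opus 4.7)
The forward inclusion $\WOP(X\mapsto X^\omega) \leq_\W \ECT\times \WOP(X\mapsto X^\omega)$ is immediate from the general fact that $\PP \leq_\W \PP\times \Q$ for every problem $\Q$; the only work is to prove the strict direction, i.e., $\ECT\times \WOP(X\mapsto X^\omega)\nleq_\W \WOP(X\mapsto X^\omega)$.

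The plan is to argue by contradiction, chaining the two different decompositions of $\ORT_\Nat$ available to us. Assume $\ECT\times \WOP(X\mapsto X^\omega)\leq_\W \WOP(X\mapsto X^\omega)$. By Theorem \ref{thm:main} we have $\ORT_\Nat \equiv_\W \ECT\times \WOP(X\mapsto X^\omega)$, and by Theorem \ref{thm:pauly} we also have $\ORT_\Nat \equiv_\W \ECT\times (\LPO')^*$. Chaining these two equivalences with the assumption yields $\ECT\times (\LPO')^*\leq_\W \WOP(X\mapsto X^\omega)$.

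Since $(\LPO')^* \leq_\W \ECT\times (\LPO')^*$ (again by $\PP\leq_\W \PP\times \Q$), composition of Weihrauch reductions then gives $(\LPO')^*\leq_\W \WOP(X\mapsto X^\omega)$, directly contradicting Corollary \ref{lpo'_wop}.

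There is no substantial obstacle here: all the ingredients are already in place. The proof is a short diagram-chase through the two splittings of $\ORT_\Nat$, and the main conceptual content was already established in Proposition \ref{isfinite_wop} (via the sequence-of-approximations diagonalization that shows $\IsFinite_\mathbb S\nleq_\W \WOP(X\mapsto X^\omega)$). The role of this corollary is to record that the $\ECT$-factor in the characterization $\ORT_\Nat \equiv_\W \ECT\times \WOP(X\mapsto X^\omega)$ is indispensable: it cannot be absorbed into the WOP factor, precisely because $\WOP(X\mapsto X^\omega)$ fails to compute $(\LPO')^*$ while $\ORT_\Nat$ does.
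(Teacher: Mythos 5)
Your proof is correct and follows essentially the same route as the paper: both argue by contradiction, chaining the two splittings $\ORT_\Nat \equiv_\W \ECT\times \WOP(X\mapsto X^\omega) \equiv_\W \ECT\times (\LPO')^*$ with the fact $(\LPO')^*\leq_\W \ECT\times(\LPO')^*$ to derive $(\LPO')^*\leq_\W \WOP(X\mapsto X^\omega)$, contradicting Corollary \ref{lpo'_wop}. Your version merely spells out explicitly the intermediate composition step that the paper leaves implicit.
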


The proof of the following proposition is a variation on the proof of \cite[Lemma 9]{Pau-Pra-Sol:24}.

\begin{proposition}
\label{lex_lpo'}
    $\WOP(X\mapsto X_{lex}^2)\nleq_\W (\LPO')^*$.
    \begin{proof}        Suppose by way of contradiction that $\WOP(X\mapsto X_{lex}^2)\leq_\W (\LPO')^*$, as witnessed by the computable Turing functionals $K$ and $H$. This means that: for every instance $\sigma$ of $\WOP(X\mapsto X_{lex}^2)$, $H(\sigma)$ is an instance of $(\LPO')^*$, say $H(\sigma)=\langle n_{\sigma}; \rho^\sigma_0,\dots, \rho^\sigma_{n_\sigma-1} \rangle$; if $\eta\in 2^{n_\sigma}$ is a solution to $(\LPO')^*$ for $H(\sigma)$, then $K\langle \eta,\: \sigma \rangle$ is a solution to $\WOP(X\mapsto X_{lex}^2)$ for $\sigma$. We aim to build an instance $\sigma$ infinite decreasing in $X^2_{lex} := \mathbb{Q}^2_{lex}$ for which the above fails.\\
        Start by letting $\tau^0:=\langle \tau_0^0,\:\tau_1^0,\dots \rangle$, $\tau_i^0:= \langle 0, \frac{1}{i+1}\rangle$ for all $i\geq 0$. Fix some $\bar{i}\geq 0$ such that the first component of $H$ converges to $n_{\tau^0}=:n$ after having read the rows $\tau_0^0,\dots, \tau_{\bar i}^0$ of $\tau^0$. Then for every instance $\tau=\langle \tau_0,\tau_1,\dots \rangle$ of $\WOP(X\mapsto X_{lex}^2)$ such that $\tau_i=\tau_i^0$ for all $0\leq i\leq \bar i$ we will also have $n_\tau=n$. Note a solution to $(\LPO')^*$ for $H(\tau^0)$ is a binary strings of length $n$.\\
        Let $\eta^0,\dots,  \eta^{2^n-1}$ be an enumeration of $2^n$. Let $\eta^{k_0}$ be the solution to $(\LPO')^*$ for $H(\tau^0)$. Since the solutions to $\WOP(X\mapsto X_{lex}^2)$ for $\tau^0$ are exactly the infinite subsequences of its second column, i.e. of the form $(\frac{1}{m^0_\ell})_{\ell \in \Nat}$ where $1 \leq m^0_0 < m^0_1 < m^0_2 <  \dots$, there exists $i_0\geq \bar i$ such that
        \[K\langle \eta^{k_0}, \langle \tau_0^0,\:...\:,\tau_{i_0}^0\rangle \rangle \downarrow \;=\langle \frac{1}{m_0^0},\dots, \frac{1}{m_{h_0}^0} \rangle  \]
        for some integers $h_0\geq 0$, $1\leq m_{0}^0<\dots\:<m_{h_0}^0$.\\
        Let $1\leq j < 2^{n}$. 
        Suppose inductively that an instance $\tau^{j-1}=\langle \tau_0^{j-1},\:\tau_1^{j-1},\dots \rangle$ of $\WOP(X\mapsto X_{lex}^2)$ and an integer $i_{j-1}\geq \bar i$ have been defined so as to satisfy:
        \begin{equation} \label{ind}
            \tau_i^{j-1}(l)\leq j\;\; \text{ for all }0\leq i\leq i_{j-1},\: l\in \{0,1\} .
        \end{equation}
        Set $\tau^j:=\langle \tau_0^j,\tau_1^j,\dots \rangle$,  where $\tau_i^j:=\tau_i^{{j-1}}$ for $0\leq i\leq i_{j-1}$ and $\tau_{i_{j-1}+i}^j:=\langle -j, j+\frac{1}{i}\rangle $ for $i\geq 1$. It follows from (\ref{ind}) that the solutions to $\WOP(X\mapsto X_{lex}^2)$ for $\tau^j$ are exactly the infinite subsequences of $\langle j + \frac{1}{m}: m\geq 1\rangle$. Hence if $\eta^{k_j}$ is the solution to $(\LPO')^*$ for $H(\tau^j)$, there exists $i_j>i_{j-1}$ such that
        \[K\langle \eta^{k_j}, \langle \tau_0^j,\dots,\tau_{i_j}^j\rangle \rangle \downarrow \;=\langle\: j+\frac{1}{m_0^j},\dots,\: j+\frac{1}{m_{h_j}^j} \:\rangle\]
        for some integers $h_j\geq 0$, $1\leq m_{0}^j<\dots \:<m_{h_j}^j$.\\
        Finally, set $\sigma:=\langle\sigma_0,\sigma_1,\dots \rangle$, where $\sigma_i:= \tau_i^{2^n-1}$ for $0\leq i\leq i_{2^n-1}$ and $\sigma_{i_{2^n-1}+i}:=\langle-2^n, 2^n+\frac{1}{i} \rangle $ for all $i\geq 1$. Notice that the indexes $k_0,\dots, k_{2^n-1}$ chosen above are necessarily all distinct. Hence there is a unique $0\leq j<2^n$ such that $\eta^{k_j}$ is the solution to $(\LPO')^*$ for $H(\sigma)$. By construction,
        $K$ outputs $\langle\: j+\frac{1}{m_0^j},\dots, \: j+\frac{1}{m_{h_j}^j} \:\rangle$ after having read the string $\eta^{k_j}$ together with the rows $\sigma_0=\tau_0^j,\dots, \sigma_{i_j}=\tau_{i_j}^j$ of $\sigma$. This contradicts the assumption on $K$, because the solutions to $\WOP(X\mapsto X_{lex}^2)$ for $\sigma$ are exactly the subequences of $\langle 2^n+ \frac{1}{m} : m\geq 1\rangle $.\\
    \end{proof}
\end{proposition}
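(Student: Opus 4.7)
The plan is to diagonalize against a hypothetical pair of Turing functionals $(H,K)$ witnessing $\WOP(X\mapsto X_{lex}^2) \leq_\W (\LPO')^*$. The crucial feature of $(\LPO')^*$ that the argument exploits is that, once its parallelization count $n_\sigma$ is known (which depends only on a finite prefix of the input), the oracle produces a binary string of length $n_\sigma$, so there are at most $2^{n_\sigma}$ possible oracle answers. On the $\WOP$ side, by working over $X=\mathbb Q$ I have enough flexibility to build many $\WOP$-instances that share long initial segments but whose solution sets are radically different. The aim is to construct a single instance $\sigma$ for which none of the finitely many oracle responses can be completed by $K$ into a legal $\WOP$-solution.

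I would first fix a convenient base instance, say $\tau^0_i := \langle 0, 1/(i+1)\rangle$, whose $\WOP$-solutions are the infinite subsequences of $\{1/(i+1):i\geq 0\}$. Since $H$ is a Turing functional, after reading a finite prefix of $\tau^0$ it has committed to a value $n := n_{\tau^0}$, and any further extension of this prefix yields an $(\LPO')^*$-instance of the same length $n$, hence only $2^n$ possible oracle responses $\eta^0,\dots,\eta^{2^n-1}$. I then run an induction of length $2^n$: at stage $j$, starting from the previously built $\tau^{j-1}$, I extend the committed prefix by a fresh descending block whose first coordinate strictly decreases (I take $-j$) and whose second coordinate is $j+1/i$. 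The resulting $\tau^j$ is still $<_{lex}$-descending, and its $\WOP$-solutions are exactly the infinite subsequences of $\{j+1/m:m\geq 1\}$. For the unique $(\LPO')^*$-answer $\eta^{k_j}$ on $H(\tau^j)$, I wait until $K$ has committed to a finite partial output $\langle j+1/m^j_0,\dots,j+1/m^j_{h_j}\rangle$ after reading $\tau^j$ up to some index $i_j>i_{j-1}$.

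Finally I form $\sigma$ by appending one more descending block beyond $\tau^{2^n-1}$, with first coordinate $-2^n$ and second coordinate $2^n+1/i$; its $\WOP$-solutions are precisely the infinite subsequences of $\{2^n+1/m:m\geq 1\}$. The main obstacle I foresee is showing that the indices $k_0,\dots,k_{2^n-1}$ are pairwise distinct — only then do they exhaust all of $2^n$ and force $\eta^{k_j}$ for some $j<2^n$ to also be the oracle response on $H(\sigma)$. I expect distinctness to follow from the observation that if $k_i=k_j$ for $i<j$, then the commitment $K$ made on $\tau^i$ lies inside the prefix shared by $\tau^i$ and $\tau^j$, so the same output (a partial sequence in $\{i+1/m\}$) would have to extend to a valid $\WOP$-solution of $\tau^j$, which lies in the disjoint, order-incompatible set $\{j+1/m\}$, a contradiction. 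Granted distinctness, the oracle response on $H(\sigma)$ equals some $\eta^{k_j}$ with $j<2^n$, and the already-committed output of $K$ consists of rationals near $j+1/m$, whereas every genuine $\WOP$-solution to $\sigma$ consists of rationals of the form $2^n+1/m$; the mismatch provides the desired contradiction.
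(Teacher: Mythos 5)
Your proposal is correct and follows essentially the same diagonalization as the paper: the same base instance, the same blocks $\langle -j,\, j+\frac{1}{i}\rangle$ exploiting that $H$ commits to $n$ on a finite prefix, the same length-$2^n$ induction, and the same final block $\langle -2^n,\, 2^n+\frac{1}{i}\rangle$. Your justification of the pairwise distinctness of the oracle answers (via monotonicity of $K$ on shared prefixes and disjointness of the solution value sets) is exactly the argument the paper leaves implicit.
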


\begin{corollary}
    $\WOP(X\mapsto X^\omega)\nleq_\W (\LPO')^*$.
    \begin{proof}
        This follows immediately from proposition \ref{lex_lpo'}, by recalling from Lemma \ref{wop_lex} that $\WOP(X\mapsto X_{lex}^n)\leq_{\sW} \WOP(X\mapsto X^\omega)[2n]$ for every $n\geq 1$.
    \end{proof}
\end{corollary}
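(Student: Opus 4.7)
The plan is to derive this immediately from Proposition \ref{lex_lpo'} together with Lemma \ref{wop_lex}, using transitivity of Weihrauch reducibility. All substantive work has already been carried out in the proof of Proposition \ref{lex_lpo'}, where an instance of $\WOP(X\mapsto X_{lex}^2)$ (in fact over $X=\mathbb{Q}$) is constructed that no $(\LPO')^*$-computation can solve uniformly.

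Concretely, I would first invoke Lemma \ref{wop_lex} at $n=2$, yielding
\[
\WOP(X\mapsto X_{lex}^2)\leq_{\sW} \WOP(X\mapsto X^\omega)[\leq 4].
\]
Since $\WOP(X\mapsto X^\omega)[\leq 4]$ is by definition the restriction of $\WOP(X\mapsto X^\omega)$ to a subclass of instances, one trivially has $\WOP(X\mapsto X^\omega)[\leq 4]\leq_{\sW}\WOP(X\mapsto X^\omega)$ (identity functionals). Chaining these two reductions gives $\WOP(X\mapsto X_{lex}^2)\leq_{\W} \WOP(X\mapsto X^\omega)$.

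The conclusion is then immediate by contradiction: assume $\WOP(X\mapsto X^\omega)\leq_\W (\LPO')^*$. Composing this hypothetical reduction with the one just displayed and using transitivity of $\leq_\W$, one obtains $\WOP(X\mapsto X_{lex}^2)\leq_\W (\LPO')^*$, contradicting Proposition \ref{lex_lpo'}. Hence no such reduction can exist, which is exactly the statement of the corollary.

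There is essentially no obstacle here, since the combinatorial core—the diagonalization against all $2^n$ possible $(\LPO')^*$-solutions by gluing together ``descending $\mathbb{Q}$-tails at increasing first coordinates''—has already been absorbed into Proposition \ref{lex_lpo'}. The only care needed is to note that Lemma \ref{wop_lex} is stated as a strong Weihrauch reduction, which in particular entails a (non-strong) Weihrauch reduction and so is safe to compose with the Weihrauch reduction assumed for contradiction.
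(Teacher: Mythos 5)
Your proposal is correct and follows exactly the paper's route: invoke Lemma \ref{wop_lex} at $n=2$ to get $\WOP(X\mapsto X_{lex}^2)\leq_\W \WOP(X\mapsto X^\omega)$ (via the trivial reduction of the restricted problem to the full one) and then conclude by transitivity from Proposition \ref{lex_lpo'}. The only difference is that you spell out the contrapositive/transitivity step that the paper leaves implicit.
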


In \cite{Pau-Pra-Sol:24} it is shown that $(\LPO')^*$ is Weihrauch-equivalent to the variants of $\ORT_\Nat$ and $\ART_\Nat$
in which the output is the color of some infinite homogeneous set for the instance coloring (these variants are denoted $c\ORT_\Nat$
and $c\ART_\Nat$ in \cite{Pau-Pra-Sol:24}. By the results of the present section we have the obvious corollary that $\WOP(X\mapsto X^\omega)$ is incomparable with these principles as well. 

\subsection{$\ECT$ is strictly weaker than $\WOP(X\mapsto X^\omega)$}

We first establish that $\WOP(X \mapsto X^\omega)$ is not reducible to $\ECT$. 
The proof of the following proposition is a variation on the proof of \cite[Lemma 9]{Pau-Pra-Sol:24}.

\begin{proposition}
\label{wop_ect}
    $\WOP(X\mapsto X_{lex}^2) \nleq_\W \ECT$.
    \begin{proof}
        Suppose by way of contradiction that $\WOP(X\mapsto X_{lex}^2)\leq_\W \ECT$, as witnessed by the computable Turing functionals $K$ and $H$. This means that for every instance $\sigma$ of $\WOP(X\mapsto X_{lex}^2)$, $H(\sigma):\mathbb N\to n_\sigma$ is an instance of $\ECT$, and for every bound $b$ for $H(\sigma)$, $K\langle b,\: \sigma \rangle$ is a solution to $\WOP(X\mapsto X_{lex}^2)$ for $\sigma$. We build a sequence $\sigma$ in $(2\times_{lex}\mathbb Q)^2$ for which the above fails.\\
        Start by letting $\tau^0:=\langle \tau_0^0,\:\tau_1^0,\dots \rangle$, $\tau_i^0:= \langle (1,1), (0,\frac{1}{i+1})\rangle$ for all $i\geq 0$. Then a solution to $\WOP(X\mapsto X_{lex}^2)$ for $\tau^0$ consists of a (possibly empty) subsequence of $\langle (1,1)\rangle$ followed by an infinite subsequence of $\langle (0, \frac{1}{m}):m\geq 1 \rangle$. Pick any bound $b_0$ for $H(\tau^0)$: by the assumption on $K$, there exists $i_0\geq 0$ such that
        \[K\langle b_0, \langle \tau_0^0,\dots, \tau_{i_0}^0\rangle \rangle \downarrow \;=\langle\alpha^0, (0,\frac{1}{m_0^0}),\dots, (0,\frac{1}{m_{h_0}^0}) \rangle  \]
        for some $\alpha^0\subseteq \langle (1,1)
        \rangle$ and some integers $h_0\geq 0$, $1\leq m_{0}^0<\dots\:<m_{h_0}^0$.\\
        Let $j\geq 1$. 
        Suppose inductively that the instances $\tau^l=\langle \tau_0^l,\:\tau_1^l,\dots \rangle$, $0\leq l<j$, of $\WOP(X\mapsto X_{lex}^2)$ and the integers $0<i_0< \dots <i_{j-1}$ have been defined so as to satisfy:
        \begin{equation} \label{indu}
            \text{for all $1\leq l<j$},\; \tau_i^l=\tau_i^{l-1} \;\text{ if }\; 0\leq i\leq i_{l-1} \;\text{ and }\; \tau_{i_{l-1}+i}^l=\langle(1, \frac{1}{l+1}),(0,l+\frac{1}{i}) \rangle \;\text{ for }\; i\geq 1.
        \end{equation}
        In order to preserve the inductive assumption, we must define $\tau^j$ as $\langle \tau_0^j,\tau_1^j,\dots \rangle$,  with $\tau_i^j:=\tau_i^{{j-1}}$ for $0\leq i\leq i_{j-1}$ and $\tau_{i_{j-1}+i}^j:=\langle (1,\frac{1}{j+1}), (0,j+\frac{1}{i})\rangle $ for $i\geq 1$. Then it follows from (\ref{indu}) that a solution to $\WOP(X\mapsto X_{lex}^2)$ for $\tau^j$ consists of a (possibly empty) subsequence of $\langle (1,1),\dots,(1,\frac{1}{j+1})\rangle$ followed by an infinite subsequence of $\langle (0,j+\frac{1}{m}): m\geq 1\rangle$. Pick a bound $b_j>b_{j-1}$ for $H(\tau^j)$: by the assumption on $K$, there exists $i_j>i_{j-1}$ such that
        \[K\langle b_j, \langle \tau_0^j,\dots,\tau_{i_j}^j\rangle  \rangle \downarrow \;=\langle\: \alpha^j,\: (0,j+\frac{1}{m_0^j}),\dots,\: (0,j+\frac{1}{m_{h_j}^j}) \:\rangle\]
        for some $\alpha^j\subseteq\langle (1,1),\dots, (1,\frac{1}{j+1})\rangle$ and some integers $h_j\geq 0$, $1\leq m_{0}^j<\dots \:<m_{h_j}^j$.\\
        Finally, set $\sigma:=\langle\sigma_0,\sigma_1,\dots \rangle$, where $\sigma_i:= \tau_i^{j}$ if $0\leq i\leq i_{j}$. Notice that $\sigma$ is well-defined by clause (\ref{indu}). Moreover, if $j$ is sufficiently large then $b_j$ is a bound to $H(\sigma)$. Now $K$ outputs $\langle\alpha^j,  (0,j+\frac{1}{m_0^j}),\dots, \: (0,j+\frac{1}{m_{h_j}^j}) \:\rangle$ after having read the integer $b_j$ together with the rows $\sigma_0=\tau_0^j,\dots,\sigma_{i_j}=\tau_{i_j}^j$ of $\sigma$. This contradicts the assumption on $K$, because the solutions to $\WOP(X\mapsto X_{lex}^2)$ for $\sigma$ are exactly the subequences of $\langle (1, \frac{1}{m})  : m\geq 1\rangle $.\\
    \end{proof}
\end{proposition}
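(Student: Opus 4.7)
My plan is to adapt the continuous-functional diagonalisation used in Propositions~\ref{isfinite_wop} and~\ref{lex_lpo'} to the specifics of $\ECT$, exploiting that every $\ECT$-instance admits a cofinal set of valid bounds. Assume for contradiction that $(H,K)$ witnesses $\WOP(X\mapsto X_{lex}^2)\leq_\W \ECT$: for every instance $\sigma$, $H(\sigma)$ is an $\ECT$-instance $f_\sigma\colon\Nat\to n_\sigma$, and for every valid $\ECT$-bound $b$ of $f_\sigma$, $K\langle b,\sigma\rangle$ is a $\WOP$-solution to $\sigma$. I take $X:=2\times_{lex}\mathbb Q$ and construct the diagonalising instance $\sigma$ as a limit of approximations $\tau^0,\tau^1,\dots$; starting from $\tau^0_i:=\langle(1,1),(0,\tfrac{1}{i+1})\rangle$, at stage $j\geq 1$ the approximation $\tau^j$ agrees with $\tau^{j-1}$ on an initial segment of length $i_{j-1}+1$ and continues from position $i_{j-1}+i$ ($i\geq 1$) with a new ``block'' $\langle(1,\tfrac{1}{j+1}),(0,j+\tfrac{1}{i})\rangle$.

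Two structural facts drive the argument. First, each $\tau^j$ is $<_{lex}$-descending in $X^2$, and its only infinite descending $X$-subsequences contained in $\tau^j$ consist of a (possibly empty) finite prefix taken in order from $\{(1,1),(1,\tfrac{1}{2}),\dots,(1,\tfrac{1}{j+1})\}$ followed by an infinite subsequence of $\langle(0,j+\tfrac{1}{m})\rangle_{m\geq 1}$; positional monotonicity in the containment relation forbids the tail from returning to an earlier block, and only the latest block contributes infinitely many level-$0$ values that are pairwise descending. Second, valid $\ECT$-bounds for $H(\tau^j)$ form a cofinal final segment of $\Nat$, so one may pick $b_j>b_{j-1}$ valid for $H(\tau^j)$; by continuity of $K$, there is some $i_j>i_{j-1}$ such that $K\langle b_j,\langle\tau^j_0,\dots,\tau^j_{i_j}\rangle\rangle$ has committed to a finite partial output of the form $\langle\alpha^j,(0,j+\tfrac{1}{m^j_0}),\dots,(0,j+\tfrac{1}{m^j_{h_j}})\rangle$ with $\alpha^j$ a descending prefix drawn from $\{(1,1),\dots,(1,\tfrac{1}{j+1})\}$.

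I then set $\sigma_i:=\tau^j_i$ whenever $i\leq i_j$; the agreement of the $\tau^j$'s on initial segments makes $\sigma$ a well-defined $<_{lex}$-descending sequence in $X^2$. The same block argument now yields that the only infinite descending $X$-subsequences contained in $\sigma$ are infinite subsequences of $\langle(1,\tfrac{1}{m})\rangle_{m\geq 1}$, since any tail at level~$0$ would again be drawn from finitely many positions in a single block. Since $H(\sigma)$ has finite range it has a minimum valid $\ECT$-bound $b^*$; because the $b_j$'s strictly increase, $b_j$ is valid for $H(\sigma)$ whenever $j$ is large. For such $j$, continuity of $K$ forces $K\langle b_j,\sigma\rangle$ to start with the same prefix as $K\langle b_j,\tau^j\rangle$, which contains a term $(0,j+\tfrac{1}{m^j_0})$ that is not a component of any $\sigma_i$ -- contradicting that $K\langle b_j,\sigma\rangle$ is a $\WOP$-solution for $\sigma$. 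The main delicacy is the combinatorial bookkeeping of the solution structures of $\tau^j$ and $\sigma$; once positional monotonicity is invoked, no parasitic infinite descending $X$-subsequence can straddle multiple blocks, and the remaining moves follow the pattern already established in the earlier propositions.
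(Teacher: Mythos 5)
Your proposal is essentially the paper's own proof: the same ground order $2\times_{\lex}\mathbb{Q}$, the same block-by-block construction of the approximations $\tau^j$ and of the limit instance $\sigma$, the same use of the upward-closedness of valid $\ECT$-bounds to choose $b_j>b_{j-1}$, and the same continuity/contradiction argument for large $j$. One small correction to your last step: you justify the contradiction by claiming that $(0,\,j+\frac{1}{m_0^j})$ is not a component of any $\sigma_i$, but this need not be true (if $m_0^j\leq i_j-i_{j-1}$ it does occur as a component of a row in block $j$ of $\sigma$); the correct and sufficient reason, which you had already established, is that every solution to $\WOP(X\mapsto X_{\lex}^2)$ for $\sigma$ is a subsequence of $\langle (1,\frac{1}{m}):m\geq 1\rangle$, so an output extending a prefix that contains a level-$0$ term cannot be a solution.
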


\begin{corollary}
    $\WOP(X\mapsto X^\omega) \nleq_\W \ECT$.
\begin{proof}
    Combine proposition \ref{wop_ect} with the fact that $\WOP(X\mapsto X_{lex}^2)\leq_{(s)\W} \WOP(X\mapsto X^\omega)$
\end{proof}
\end{corollary}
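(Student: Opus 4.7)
The plan is to deduce this corollary as an immediate consequence of Proposition \ref{wop_ect} combined with Lemma \ref{wop_lex}, which were both established earlier. The real combinatorial work has already been done in Proposition \ref{wop_ect}; once that is in hand, the reduction between the two variants of the $\WOP$ principle makes the corollary essentially automatic.

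More precisely, the first step is to observe that Lemma \ref{wop_lex} instantiated at $n=2$ gives $\WOP(X\mapsto X_{lex}^2) \leq_{\sW} \WOP(X\mapsto X^\omega)[\leq 4]$, and since $\WOP(X\mapsto X^\omega)[\leq 4]$ is simply the restriction of $\WOP(X\mapsto X^\omega)$ to a class of its instances, the identity functionals witness $\WOP(X\mapsto X^\omega)[\leq 4]\leq_{\sW} \WOP(X\mapsto X^\omega)$. Chaining these two reductions yields $\WOP(X\mapsto X_{lex}^2)\leq_{\sW} \WOP(X\mapsto X^\omega)$, and in particular $\WOP(X\mapsto X_{lex}^2)\leq_{\W} \WOP(X\mapsto X^\omega)$.

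The second step is the contrapositive. Assume for contradiction that $\WOP(X\mapsto X^\omega) \leq_\W \ECT$. Then transitivity of Weihrauch reducibility, combined with the reduction established in the previous paragraph, yields $\WOP(X\mapsto X_{lex}^2) \leq_\W \ECT$, directly contradicting Proposition \ref{wop_ect}. Hence no such reduction can exist.

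There is no genuine obstacle at this stage: the diagonalization against any candidate $(K,H)$ reducing $\WOP(X\mapsto X_{lex}^2)$ to $\ECT$ was already carried out inside Proposition \ref{wop_ect}, where an instance in $(2\times_{lex}\mathbb Q)^2$ is engineered so that $K$'s extensional commitments along a growing family of approximating sequences $\tau^j$ force it to emit subsequences of the wrong column. The only nuance worth double-checking here is that Lemma \ref{wop_lex} is indeed formulated uniformly enough in $n$ to let us simply plug in $n=2$, which is clear from its statement.
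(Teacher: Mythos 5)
Your proposal is correct and matches the paper's proof exactly: the paper likewise combines Proposition \ref{wop_ect} with the reduction $\WOP(X\mapsto X_{lex}^2)\leq_{\sW}\WOP(X\mapsto X^\omega)$ (obtained from Lemma \ref{wop_lex} at $n=2$, since the restricted principle trivially reduces to the full one) and concludes by transitivity. Your write-up merely makes these routine steps explicit.
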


In order to prove that $\ECT \leq \WOP(X \mapsto X^\omega)$ we need the following proposition. 

\begin{proposition}
\label{tcn_wop}
   $(\TC_{\mathbb N})^n\leq_{\sW} \WOP(X\mapsto X_{lex}^{2^n})$ for all $n\geq 1$.
\end{proposition}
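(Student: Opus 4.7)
My plan is to proceed by induction on $n$. For the base case $n = 1$, I would show $\TC_\Nat \leq_\sW \WOP(X \mapsto X_{lex}^2)$ via a direct construction with $X = \mathbb Q$: given an enumeration $e$ of a subset of $\Nat$, track the running candidate $m_i := \min(\Nat \setminus \{e(0), \dots, e(i-1)\})$, the number $c_i$ of candidate changes before stage $i$, and the stability counter $k_i$ recording the number of stages since the last change. Setting
\[
\sigma_i := \bigl(-c_i,\; m_i + \tfrac{1}{k_i + 2}\bigr)
\]
gives a strictly $<_{lex}$-descending sequence in $\mathbb Q^2$: whenever the candidate changes the first coordinate strictly drops, and between changes the first is unchanged while the second drops as $k_i$ grows. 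When $\mathrm{ran}(e) \neq \Nat$ the candidate stabilizes at $m^* := \min(\Nat \setminus \mathrm{ran}(e))$, and crucially the stable-phase second-coordinate values $m^* + 1/(k+2)$ are strictly greater than every other component of $\sigma$ (the earlier candidates satisfy $m_i + 1/(k_i+2) < m^*$, while first-coordinate values are non-positive). Any infinite descending sequence $\overrightarrow x$ in $\mathbb Q$ drawn from $\sigma$'s components thus necessarily contains stable-phase values, and its maximum $x_0$ must itself be a stable-phase value; hence $K(\overrightarrow x) := \lfloor x_0 \rfloor = m^*$ is a valid $\TC_\Nat$-solution (the case $\mathrm{ran}(e) = \Nat$ is trivial since any output is valid).

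For the inductive step I would use the identification $X_{lex}^{2^{n+1}} \cong (X_{lex}^{2^n})_{lex}^2$ and combine the inductive reduction for $e_1, \dots, e_n$ with a base-case-like construction for $e_{n+1}$. Let $\tau := H_n(e_1, \dots, e_n) \in X_{lex}^{2^n}$ be the $\WOP$-instance supplied by the inductive reduction. I would set $\sigma_i = (p_i, q_i)$, where $p_i \in X_{lex}^{2^n}$ encodes only the base-case data for $e_{n+1}$ in its first two coordinates (say, $p_i = (-c^{n+1}_i,\, m^{n+1}_i,\, c, \dots, c)$ for a fixed constant $c$ in the remaining $2^n - 2$ coordinates), and $q_i := \tau_{J(i)}$, where $J(i)$ counts the stages up to $i$ at which $e_{n+1}$'s candidate does not change. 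Whenever $e_{n+1}$'s candidate changes, $p$ strictly decreases in $X_{lex}^{2^n}$ so $\sigma$ descends; otherwise, $p$ is unchanged and $q$ advances to the strictly smaller next term of $\tau$, again making $\sigma$ lex-descending.

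The main obstacle I anticipate is the simultaneous extraction of all $n+1$ $\TC_\Nat$-solutions from an arbitrary $\WOP$-solution $\overrightarrow x$. When $\mathrm{ran}(e_{n+1}) \neq \Nat$, the block $p$ eventually takes only finitely many distinct component values, so $\overrightarrow x$ must contain infinitely many $q$-contributions which form an infinite $<$-descending sequence drawn from $\tau$'s components; applying the inductive extractor $K_n$ then recovers $(m^*_1, \dots, m^*_n)$. The delicate point is recovering $m^*_{n+1}$: $\overrightarrow x$ might a priori contain no $p$-components. I would handle this by arranging the value ranges so that every $p$-component is strictly larger in $X$ than every $q$-component (mimicking the base-case trick with stable-phase dominance), which forces the maximum $x_0$ of $\overrightarrow x$ to be a $p$-component encoding $m^*_{n+1}$ (read off as a floor, as in the base case). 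The main technical work is to verify that this range-separation can be maintained consistently across inductive levels while still allowing $K_n$ to apply correctly to the restricted $q$-subsequence of $\overrightarrow x$.
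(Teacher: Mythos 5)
Your base case is fine: the sequence $\sigma_i=(-c_i,\,m_i+\tfrac1{k_i+2})$ is indeed $<_{lex}$-descending, the non-stable components take only finitely many distinct values when $\mathrm{ran}(e)\neq\mathbb N$, and stable-phase dominance then forces $x_0$ to be a stable value, so $\lfloor x_0\rfloor=m^*$. The inductive step, however, has a genuine gap: a two-block product construction cannot guarantee \emph{simultaneous} extraction, because a $\WOP$-solution is free to live entirely inside one block. Concretely, take $e_{n+1}(i)=i$, so the candidate for $e_{n+1}$ changes at every stage: then $J(i)=0$ for all $i$, the $q$-block never advances past $\tau_0$, and the components $-c^{n+1}_i=-i$ already contain an infinite descending chain. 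The computable sequence $0>-1>-2>\cdots$ is a legitimate solution to your instance, yet it carries no information about $e_1,\dots,e_n$; a computable $K$ fed this solution would have to solve $\TC_\Nat$ for $e_1$ outright (even with access to the instance), which is impossible. Dually, when $\mathrm{ran}(e_{n+1})\neq\mathbb N$ a solution may consist entirely of $q$-components: your range-separation idea (every $p$-component larger than every $q$-component) does not force any $p$-component to \emph{appear} in a descending solution --- dominance never forces membership. In the base case the analogous step worked only because the non-stable components formed a finite set of values, so the solution had no choice; here both blocks can contain infinite descending chains, and whichever block the adversary chooses, the other instance's answer is lost. The same objection applies to any attempt to fix this while keeping the ``outer instance in block one, inductive instance in block two'' shape.

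This simultaneity problem is precisely what the paper's (non-inductive) construction is built to overcome: the $2^n$ coordinates are indexed by the subsets of $[n]$, every finite entry encodes the \emph{entire} current guess tuple $\langle g_1(i),\dots,g_n(i)\rangle$ at once via the prime-power term $\frac1{p_1^{g_1(i)}\cdots p_n^{g_n(i)}p_{n+1}^i}$, and the reset/cycle bookkeeping (Lemma \ref{vi}) together with the $count_j$ values ensures that any infinite descending sequence drawn from the components is eventually trapped in the column indexed by $A_y=\{m:\mathrm{ran}(e_m)=\mathbb N\}$, at stages past $i_{**}\geq i_*$, where all guesses for $m\notin A_y$ have already stabilized; hence decoding \emph{any} finite term of the solution yields correct answers for all $n$ instances simultaneously. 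To repair your argument you would need the second block's values to also encode the data of $e_{n+1}$ (and to rule out solutions escaping down the $-c^{n+1}_i$ chain), which in effect pushes you back to a simultaneous encoding of the paper's kind rather than an induction on $n$.
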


For our proof of Proposition \ref{tcn_wop} we need the following definitions and lemma.

    \begin{definition}
    Let $(U_i)_{i\geq 0}$ be any sequence of subsets of $[n]$. Define a set of $\oplus_{i}{U_i}$-computable sequences $reset_A$, $A\subseteq [n]$, as follows. Start by setting $reset_A(0):=-1$ for all $A\subseteq[n]$. Let $i\geq 0$ and assume that, for all $A\subseteq[n]$, $reset_A(i)$ has been defined and $reset_A(i)< i$. We say that $B\subseteq [n]$ completes a cycle at stage $i$ if 
    \[\bigcup_{reset_B(i)<h\leq i}U_h= B.\]
    For all $A\subseteq [n]$, set: $reset_A(i+1):= i$ if some  $B\supseteq A$  completes a cycle at stage $i$, in which case we say that $A$ is reset at stage $i$; $reset_A(i+1):=reset_A(i)$ otherwise.
    \end{definition}
    
    \begin{lemma}
    \label{vi}
    For all $i\geq 0$, 
    \[ V_{i}:=\bigcup_{reset_{U_i}(i)<h\leq i}U_h.\]
    is the unique subset of $[n]$ completing a cycle at stage $i$.
    \begin{proof}
    It's clear that $V_0=U_0$ is the unique subset of $[n]$ completing a cycle at stage $0$.\\
    Let $i\geq 1$. Assume by ($\Delta_0^{0, \oplus_i{U_i}}$-) induction that, for all $0\leq h<i$, $V_h$ is the unique subset of $[n]$ completing a cycle at stage $h$. This implies that $A\subseteq [n]$ is reset at stage $0\leq h<i$ if and only if $A\subseteq V_h$.\\
    First, we show that if $reset_{U_i}(i)\geq 0$ then  $V_i\subseteq V_{reset_{U_i}(i)}$. Suppose otherwise, and let $reset_{U_i}(i)<  h\leq   i$ be least such that $U_h\nsubseteq V_{reset_{U_i}(i)}$. Since $U_i$ is reset at stage $reset_{U_i}(i)$, it must be contained in $V_{reset_{U_i}(i)}$ by the inductive hypothesis. Hence $h<i$. By the choice of $h$, $U_l\subseteq V_{reset_{U_i}(i)}$ is also reset at stage $reset_{U_i}(i)$ for all $reset_{U_i}(i)<l<h$. Then, for each $reset_{U_i}(i)<k<h$, $V_k\subseteq \bigcup_{reset_{U_i}(i)<l\leq k}U_{l}\subseteq  V_{reset_{U_i}(i)}$, so that $U_h\nsubseteq V_{k}$ is not reset at stage $k$ by the inductive hypothesis. We infer that $reset_{U_h}(h)< reset_{U_i}(i)$. This implies that $U_{reset_{U_i}(i)}\subseteq V_h$, which in turn implies that $reset_{V_h}(reset_{U_i}(i))\leq reset_{U_{reset_{U_i}(i)}}(reset_{U_i}(i))$. Since $reset_{V_h}(reset_{U_i}(i))=reset_{V_h}(h)$, it follows that $V_{h}\supseteq V_{reset_{U_i}(i)}$ ($\supsetneq$, actually). In particular, $U_i\subseteq V_{reset_{U_i}(i)}$ is reset at stage $h\in (reset_{U_i}(i),i)$, which contradicts the definition of $reset_{U_i}(i)$.\\ 
    We show that $V_{i}$ completes a cycle at stage $i$. This is clearly the case if $reset_{U_i}(i)=-1$. Suppose instead that $reset_{U_i}(i)\geq 0$. Surely $reset_{V_{i}}(i)\leq reset_{U_i}(i)$, as $V_{i} \supseteq U_i$. It then suffices to show that $V_i$ is reset at stage $reset_{U_i}(i)$. By the inductive hypothesis, this is the same as saying that $V_i\subseteq V_{reset_{U_i}(i)}$, which was proven above.\\
    Lastly, we prove that $V_i$ is unique. Suppose by way of contradiction that $W\neq V_i$ completes a cycle at stage $i$. Then $W\supseteq U_i$, hence $reset_{W}(i)\leq reset_{U_i}(i)=reset_{V_i}(i)$. Since both $W$ and $V_i$ complete a cycle at stage $i$, this implies that $W\supseteq V_i$, or rather that $W\supsetneq  V_i$, by our assumption. We must then have $reset_W(i)< reset_{U_i}(i)$ and $W-V_i\subseteq \bigcup_{reset_{W}(i)<h\leq reset_{U_i}(i)} U_h$. The former gives $W\supseteq U_{reset_{U_i}(i)}$, so that $reset_W(i)=reset_W(reset_{U_i}(i))\leq reset_{U_{reset_{U_i}(i)}}(reset_{U_i}(i))$. It follows that
    \[ V_{reset_{U_i}(i)}\subseteq \bigcup_{reset_{W}(i)<h\leq reset_{U_i}(i)} U_h.\]
    Therefore
    \[W=(W-V_i)\cup V_i \subseteq(W-V_i) \cup V_{reset_{U_i}(i)}  \subseteq \bigcup_{reset_{W}(i)<h\leq reset_{U_i}(i)} U_h\subseteq W.\]
    We conclude that $W$ completes a cycle at some stage $h\in (reset_W(i), i)$, which contradicts the definition of $reset_W(i)$.
    \end{proof}
    \end{lemma}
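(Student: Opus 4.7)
I would proceed by strong induction on $i$. The base case $i=0$ is immediate: with $reset_A(0)=-1$ for all $A$, a set $B$ completes a cycle at stage $0$ iff $B=U_0$. For the inductive step, the key corollary of the inductive hypothesis is the following anti-monotonicity principle: for every $h<i$ and every $A\subseteq[n]$, $A$ is reset at stage $h$ if and only if $A\subseteq V_h$. Combined with the fact that $reset_A$ is non-decreasing in its argument, this single principle drives the rest of the proof.

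The crux is the inclusion $V_i\subseteq V_r$ where $r:=reset_{U_i}(i)\geq 0$ (the degenerate case $r=-1$ will be handled analogously). Assume for contradiction $V_i\not\subseteq V_r$ and let $h\in(r,i]$ be least with $U_h\not\subseteq V_r$. Since $U_i\subseteq V_r$ by anti-monotonicity, $h<i$. For $l\in(r,h)$ the minimality of $h$ gives $U_l\subseteq V_r$, whence $U_l$ is reset at stage $r$, $reset_{U_l}(l)\geq r$, and therefore $V_l\subseteq V_r$. Consequently $U_h$ fails to be reset at any stage in $[r,h-1]$, forcing $reset_{U_h}(h)<r$, so the range defining $V_h$ reaches down to (at least) $r$ and yields $U_r\subseteq V_h$. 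A further transfer of anti-monotonicity --- using that $V_h$ itself completes a cycle at stage $h$ by the inductive hypothesis and that $V_h$ is not reset in $[r,h-1]$ (since $V_k\subseteq V_r$ for $k\in(r,h)$ while $V_h\not\subseteq V_r$) --- gives $reset_{V_h}(h)=reset_{V_h}(r)\leq reset_{U_r}(r)$, and hence $V_h\supseteq V_r$. But then $U_i\subseteq V_r\subseteq V_h$, so $U_i$ is reset at the strictly later stage $h$, contradicting $r=reset_{U_i}(i)$.

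Cycle-completion for $V_i$ then comes for free: anti-monotonicity applied to $V_i\supseteq U_i$ gives $reset_{V_i}(i)\leq r$, while $V_i\subseteq V_r$ means $V_i$ is reset at stage $r$, so $reset_{V_i}(i)\geq r$; equality makes the defining union of $V_i$ coincide with the cycle condition at stage $i$. For uniqueness, suppose $W\neq V_i$ also completes a cycle at stage $i$. From $W\supseteq U_i$ we get $reset_W(i)\leq r$, and equality would force the two cycle-defining unions to coincide; so $reset_W(i)<r$. This strict inequality prevents $W$ from being reset in $[r,i-1]$, so $reset_W(r)=reset_W(i)$, and combining $U_r\subseteq W$ with anti-monotonicity yields $V_r\subseteq \bigcup_{reset_W(i)<h\leq r} U_h$. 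A short calculation then collapses $W$ onto $\bigcup_{reset_W(i)<h\leq r} U_h$, witnessing that $W$ completes a cycle at stage $r<i$; this forces $reset_W(i)\geq r$, the sought contradiction. The delicate step is the chain of anti-monotone transfers establishing $V_h\supseteq V_r$ inside the main inclusion --- once this is in hand, both cycle-completion and uniqueness reduce to essentially mechanical bookkeeping of the $reset$ values.
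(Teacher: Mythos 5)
Your proposal is correct and follows essentially the same route as the paper's proof: strong induction yielding the "reset at stage $h$ iff contained in $V_h$" principle, the key inclusion $V_i\subseteq V_{reset_{U_i}(i)}$ established by the same minimal-$h$ contradiction (via $reset_{U_h}(h)<reset_{U_i}(i)$, $U_{reset_{U_i}(i)}\subseteq V_h$, and $V_h\supseteq V_{reset_{U_i}(i)}$), and the same derivations of cycle-completion and uniqueness from the resulting equalities of $reset$ values. The only differences are expository (you make explicit why $reset_{V_h}(h)=reset_{V_h}(r)$, which the paper merely asserts), so no further comparison is needed.
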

    \begin{proof}[Proof of proposition \ref{tcn_wop}]
    Let $\overrightarrow e=\langle e_1,\dots, e_n\rangle$ be an instance of $(\TC_{\mathbb N})^n$.\\
    For a fixed $m\in [n]$, $g_m(-1):=0$ is the default, optimistic guess for a solution to $\TC_{\mathbb N}$ for $e_m$. At each stage $i\geq 0$, the $(i-1)$-th guess is updated to $g_m(i):=\min \big(\mathbb N- ran (e_m\restriction \{0,\dots,i\})\big)$. Let $\overrightarrow g$ be the sequence of $n$-tuples $\langle g_1(i),\dots, g_n(i)\rangle$, $i\geq -1$. Then the set of  components of $\overrightarrow g$ that are updated at each stage $i\geq 0$ is
    \[U_i:=\{m\in [n]\;:\;e_m\text{ enumerates } g_{m}(i-1) 
    \text{ at stage } i\}.\]
    Both $\overrightarrow g$ and the cycle structure associated to its update sets $(U_i)_{i\geq 0}$ are encoded into an $\overrightarrow e$-computable sequence $\sigma=(\sigma_i)_{i\geq 0}$ in $[0,\infty ]^{2^n}$. To build $\sigma$, first fix a total ordering $A_{1}=\emptyset<A_2<\dots \:<A_{2^n}=[n]$ of $\mathcal P([n])$ extending $\subseteq$ and, for each $2^n\geq j\geq 1$, initialize a sequence $count_j$ by setting $count_j(0):=2^n-|A_j|$.\\
    Stage $0$. Set:
    \[a_{j_0}^{0}:=count_{j_0}(0)+\frac{1}{p_1^{g_1(0)}p_2^{g_2(0)} \dots\: p_n^{g_n(0)}},\] where $A_{j_0}=U_0$ and $p_m$ denotes the $m$-th prime number; 
    \[a_j^{0}:=\infty \;\text{for all}\; j\neq j_0;\] $\sigma_0:=(a_{2^n}^0,\dots \:,a_{1}^0)$. For all $2^n\geq j\geq 1$, set:
    \[
    count_j(1):=\begin{cases}
			count_{j_0}(0)+ |A_{j_0}- A_j|, & \text{if $A_j\subsetneq A_{j_0}$}\\
            count_j(0), & \text{otherwise}.
		 \end{cases}
    \]
    Stage $i\geq 1$. Let $j_i$ satisfy $A_{j_i}=V_i$, where the latter is defined as in lemma \ref{vi}. Set:  
    \[ a_j^{i}:=a_j^{i-1}\;\;\text{for all}\; j>j_i;\]
    \[ a_{j_i}^{i}:=count_{j_i}(i) +\frac{1}{p_1^{g_1(i)}p_2^{g_2(i)}\dots \: p_n^{g_n(i)}p_{n+1}^{i}}; \]
    \[ a_j^{i}:=\infty \;\;\text{for all}\; j_i>j\geq 1;\]
    $\sigma_i:=(a_{2^n}^i,\dots,a_{1}^i)$. For all $2^n\geq j\geq 1$, set:
    \[
    count_j(i+1):=\begin{cases}
			\max_{0\leq h\leq i}count_{j_h}(h)+ |A_{j_i}- A_j|, & \text{if $A_j\subsetneq A_{j_i}$}\\
            count_j(i), & \text{otherwise}.
		 \end{cases}
    \]
    This completes the definition of $\sigma$.\\
    In order to show that $\sigma$ is an instance of $\WOP(X\mapsto X^{2^n}_{lex})$, i.e. that it's descending with respect to $<_{lex}$, we only have to check that $a_{j_i}^{i}<a_{j_i}^{i-1}$ for all $i\geq 1$. This is obviously the case if $a_{j_i}^{i-1}=\infty$. Now $a_{j_i}^{i-1}<\infty$ if and only if there exists $0\leq i'<i$ such that $j_{i'}=j_i$ and $A_{j_h}\nsupseteq A_{j_i}$ for all $i'<h< i$. This implies that $count_{j_i}(i')=count_{j_i}(i)$. Moreover, $g_m(i')\leq g_m(i)$ for all $m\in [n]$  (with $g_m(i')< g_m(i)$ if and only if $m\in A_{j_i}$), and $p_{n+1}^{i'}<p_{n+1}^{i}$. The claim follows.
    \begin{remarks}
    \begin{enumerate}
        Let $2^n\geq y\geq 1$ satisfy
        \[A_y=\{m\in [n]\:|\:ran(e_m)=\mathbb N \}.\]
        Notice that, for all $m\in[n]$, $m\in A_y$ if and only if $m\in U_i$ for infinitely many $i$-s.
        \item \label{one} Of course, if $m\in A_y$ then any natural number is a solution to $TC_{\mathbb N}$ for $e_m$. Let $i_*\geq -1$ be least such that $U_i\subseteq A_y$ for all $i>  i_*$. Then, for all $m\notin A_y$, $g_m(i_*)$ is a (the least) solution to $\TC_{\mathbb N}$ for $e_m$ and $g_m(i)=g_m(i_*)$ for all $i\geq i_*$.
        \item \label{two} Let $i_{**}\geq -1$ be least such that $A_{j_i}\subseteq  A_y$ for all $i> i_{**}$. If $i_*=-1$ then $i_{**}=-1$ as well. Otherwise $U_{i_*}\nsubseteq A_y$, hence $A_{j_{i_*}}\nsubseteq A_y$: in this case, $i_{**}$ is the unique $i\geq i_*$ such that $A_{j_{i}}\supsetneq A_{y}$.
        \item \label{three} Let $i_{0}$ be the least $i> i_{**}$ such that $y=j_i$. For all $k\geq 1$, let $i_{k}$ be the least $i> i_{k-1}$ such that $y=j_i$. Then $a_y^i=\infty$ for all (possible) $i_{**}<i<i_0$, and
        \[\infty> a_y^{i_0}=\;...\;=a_{y}^{i_1-1}>a_{y}^{i_1}=\dots \:>a_{y}^{i_{k-1}}=\:...\:=a_{y}^{i_{k}-1}>a_{y}^{i_{k}}=\dots \;.\]
        \item \label{four}Suppose that $i_{**}\geq 0$. Let $0\leq i\leq i_{**}$ and $2^n\geq j\geq 1$. By remark \ref{two}, either $a_j^i=\infty$ or $a_j^i<a_y^{i_k}$ for all $k\geq 0$.
        \item \label{five} For all $y>j\geq 1$ and for all $i>i_{**}$, either $a_j^i=\infty$ or $a_j^i=a_{j_{h}}^{h}$ for some $i_{**}<h\leq i$. In both cases, $a_j^i>a_y^{i_0}$.
        \item \label{six} For all $2^n\geq j\geq 1$ such that $A_j\nsubseteq A_y$ (hence for all $2^n\geq j>y$) and for all $i>  i_{**}$, $a_j^i=a_j^{i_{**}\vee 0}$.
    \end{enumerate}
     \end{remarks}

    Let $\overrightarrow x$ be a solution to $\WOP(X\mapsto X^\omega)$ for $\sigma$. It follows from remarks \ref{three}, \ref{four}, \ref{five}, \ref{six} above that 
    \[\overrightarrow x=\alpha\overrightarrow v\overrightarrow w,\]
    where: $\alpha$ is either $\infty$ or the empty sequence; $\overrightarrow v$ is a (necessarily finite, and possibly empty) descending sequence contained in $\{a_{j_i}^i\;:\;i> i_{**}\:\&\:y>j_i\}$; $\overrightarrow w$ is a subsequence of $(a_y^{i_k})_{k\geq 0}$. Pick any finite term of $\overrightarrow x$, and compute the corresponding guess $\langle g_1(i),\dots \:,g_n(i)\rangle$, $i>i_{**}\geq i_*$. This is a solution to $(\TC_{\mathbb N})^n$ for $\overrightarrow e$ by remark \ref{one}.
\end{proof}

\begin{corollary}
    $\ECT\leq_{\W} \WOP(X\mapsto X^\omega)$.
    \begin{proof}
        By Proposition \ref{tcn_wop}, $(\TC_{\mathbb N})^n\leq_{\sW} \WOP(X\mapsto X_{lex}^{2^n})\leq_\W \WOP(X\mapsto X^\omega)$ for all $n\geq 1$. Hence $ (\TC_{\mathbb N})^*\leq_\W \WOP(X\mapsto X^\omega)$. The desired conclusion follows from the fact that $\ECT \equiv_\W (\TC_\Nat)^*$ (Lemma \ref{lem:ect_tc}).
    \end{proof}
\end{corollary}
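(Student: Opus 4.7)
The plan is to chain together the reductions already established in this subsection with the structural facts from the background. By Proposition \ref{tcn_wop}, for every $n \geq 1$ we have $(\TC_\Nat)^n \leq_\sW \WOP(X \mapsto X^{2^n}_{lex})$, and by Lemma \ref{wop_lex} applied with parameter $2^n$, we have $\WOP(X \mapsto X^{2^n}_{lex}) \leq_\sW \WOP(X \mapsto X^\omega)[\leq 2^{n+1}]$, which trivially reduces to the unrestricted $\WOP(X \mapsto X^\omega)$. Composing these two reductions yields, for each fixed $n \geq 1$, a Weihrauch reduction $(\TC_\Nat)^n \leq_\W \WOP(X \mapsto X^\omega)$.

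The next step is to lift this family of reductions to one from the full finite parallelization $(\TC_\Nat)^*$. The key observation is that an instance of $(\TC_\Nat)^*$ explicitly carries the arity $n$ as part of the input, so the forward functional can compute $2^n$ and then run the constructions of Proposition \ref{tcn_wop} and Lemma \ref{wop_lex} uniformly in $n$ to produce a single instance of $\WOP(X \mapsto X^\omega)$. Correspondingly, the backward functional reads off from any descending sequence in $X$ returned by $\WOP(X \mapsto X^\omega)$ a tuple in $\Nat^n$ solving the $n$ parallel $\TC_\Nat$-instances, using the recorded value of $n$ to parse the output. This gives $(\TC_\Nat)^* \leq_\W \WOP(X \mapsto X^\omega)$.

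Finally, we invoke Lemma \ref{lem:ect_tc} which gives $\ECT \equiv_\W (\TC_\Nat)^*$, and transitivity of $\leq_\W$ yields the conclusion $\ECT \leq_\W \WOP(X \mapsto X^\omega)$. There is no real obstacle at this stage, as all the combinatorial work has been done in Proposition \ref{tcn_wop}; the only thing to check is that the exponential blow-up $n \mapsto 2^n$ in the dimension of the lexicographic power does not spoil uniformity, but since $n$ is part of the input, computing $2^n$ and building the corresponding instance is a straightforward computable operation.
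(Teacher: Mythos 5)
Your proposal is correct and follows essentially the same route as the paper: chain Proposition \ref{tcn_wop} (via Lemma \ref{wop_lex}) to get $(\TC_\Nat)^n \leq_\W \WOP(X\mapsto X^\omega)$ uniformly in $n$, lift to $(\TC_\Nat)^*$ using that $n$ is part of the input, and conclude via $\ECT \equiv_\W (\TC_\Nat)^*$ (Lemma \ref{lem:ect_tc}). The only difference is that you make explicit the uniformity-in-$n$ argument that the paper's ``Hence'' leaves implicit.
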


Note that, since the proof of $(\TC_\Nat)^*\equiv_\W \ECT$ in \cite{DHHPPK:20} shows $(\TC_\Nat)^n \equiv_\W \ECT_{n+1}$, 
where $\ECT_{n+1}$ is the restriction of $\ECT$ to coloring in $(n+1)$ colors, Proposition \ref{tcn_wop} also shows
$\ECT_{n+1}\leq_{\W} \WOP(X\mapsto X_{lex}^{2^n})$ for all $n\geq 1$.

\section{Conclusion and perspectives}\label{sec:conclusion}

We have compared in terms of Weirauch reducibility the Ordered Ramsey Theorem $\ORT_\Nat$ and the Well-Ordering Preservation Principle for $\omega$-exponentiation $\WOP(X \mapsto X^\omega)$. The fact that the Ordered Ramsey Theorem is reducible to the parallel product of $\ECT$ and $\WOP(X\mapsto X^\omega)$ is the first example of a Weihrauch reduction of a combinatorial principle to a well-ordering preservation principle. 

While we obtained a complete picture concerning the principles studied in this paper, many question remains for other principles of interest.
We consider the present work as a specimen of a wider study of Weihrauch reducibilities between WOPs and Ramsey-type theorems. 
We believe that the approach used in the present paper can be successfully applied to many other principles of strength around that of $\Sigma^0_2$-induction. We believe that the reductions from WOPs can be useful to obtain analogous results also at higher level of logical complexity, e.g., for the principles studied in \cite{Car-Mai-Zda:25}.

\end{document}